\documentclass[11pt,reqno]{amsart}

\usepackage[a4paper,top=3cm,bottom=3cm,left=2.7cm,right=2.7cm]{geometry}

\usepackage{amssymb}
\usepackage{amsmath}
\usepackage{mathtools}
\usepackage{mathrsfs}
\usepackage{enumerate}
\usepackage{xcolor}

\usepackage[colorlinks=true,
            citecolor=red,
            linkcolor=blue,
            urlcolor=red,
            bookmarksnumbered=true,
            bookmarksopen=true]{hyperref}
\usepackage[capitalize,noabbrev]{cleveref}

\crefname{thm}{Theorem}{Theorems}
\Crefname{thm}{Theorem}{Theorems}
\crefname{problem}{Problem}{Problems}
\Crefname{problem}{Problem}{Problems}
\crefname{question}{Question}{Questions}
\Crefname{question}{Question}{Questions}

\numberwithin{equation}{section}

\newtheorem{thm}{Theorem}[section]
\newtheorem{theorem}[thm]{Theorem}

\newtheorem{question}[thm]{Question}
\newtheorem{problem}[thm]{Problem}

\newtheorem*{problem*}{Problem}
\newtheorem*{question*}{Question}

\theoremstyle{definition}

\begin{document}

\title[On some open problems in commutative algebra resolved by Rethlas]{On some open problems in commutative algebra resolved by Rethlas}

\author{Jiedong Jiang, Yixiao Li, Zeming Sun, Yuefeng Wang, Liang Xiao, and Jiahong Yu}

\address{Jiedong Jiang, Westlake Institute for Advanced Study, Westlake University, No. 600 Dunyu Road, Sandun town, Xihu district, Hangzhou, Zhejiang, 310030, China.}
\email{jiangjiedong@westlake.edu.cn}

\address{Yixiao Li, Beijing International Center for Mathematical Research, Peking University, No.~5 Yiheyuan Road, Haidian District, Beijing 100871, China.}
\email{2201110016@pku.edu.cn}

\address{Zeming Sun, Beijing International Center for Mathematical Research, Peking University, No.~5 Yiheyuan Road, Haidian District, Beijing 100871, China.}
\email{zeming@kurims.kyoto-u.ac.jp}

\address{Yuefeng Wang, Beijing International Center for Mathematical Research, Peking University, No.~5 Yiheyuan Road, Haidian District, Beijing 100871, China.}
\email{2501110002@stu.pku.edu.cn}

\address{Liang Xiao, New Cornerstone Science Laboratory, School of Mathematical Sciences, Peking University, No.~5 Yi He Yuan Road, Haidian District, Beijing, 100871, China.}
\email{lxiao@bicmr.pku.edu.cn}

\address{Jiahong Yu, Morningside Center of Mathematics, Chinese Academy of Sciences, No.~55, Zhongguancun East Road, Beijing, 100190, China.}
\email{yu\_jh@amss.ac.cn}

\subjclass[2020]{13C15, 13D02, 13F05, 13F20, 13J10, 17B55, 05E40}
\keywords{Commutative algebra, Boij--S\"oderberg
theory, automated theorem proving, Rethlas}
\date{\today}

\begin{abstract}
We report on a collection of open problems in commutative algebra and related areas
that have been resolved (proved or disproved) using the Rethlas
natural-language automated reasoning system. The problems are drawn from several
published lists, including \emph{Open Problems in Commutative Ring Theory}
(Cahen--Fontana--Frisch--Glaz), Erman--Sam's survey of Boij--S\"oderberg
theory.
For each problem we record the precise statement and a
self-contained proof produced (with no human intervention) by Rethlas
and subsequently verified by human experts.
\end{abstract}

\maketitle

\tableofcontents

\section{Introduction}

This paper records solutions to a collection of open problems in
commutative algebra and related areas. The problems are drawn from two
sources: the volume \emph{Open Problems in Commutative Ring Theory}
edited by Cahen, Fontana, Frisch and Glaz~\cite{CFFG2014}, and the
survey of Boij--S\"oderberg theory by Erman and
Sam~\cite{ErmanSam2017}. All proofs were generated by the Rethlas
automated reasoning system~\cite{JuEtAl2026Rethlas}
\footnote{The raw output of Rethlas is available at
\href{https://github.com/frenzymath/Rethlas_results/tree/main/analysis/Brezis_OpenProblems}{Rethlas results homepage}.} 
and subsequently
verified by human experts. We summarize the results below, organized
by mathematical subject.

Two of the problems concern the behaviour of GCD-like ring conditions
under group-ring constructions.
Glaz~\cite{Glaz2000-FCRZD,Glaz2000-FCR} studied the notations of
G-GCD rings and finite conductor rings, extending the classical theory of GCD domains.
Problems~\ref{prob:4ai} and~\ref{prob:4aii} ask whether the finite
conductor and quasi coherent properties ascend from a G-GCD ring~$R$ to
the group ring~$RG$ for a finitely generated abelian group~$G$. Both
answers are negative, via counterexamples built from Glaz's non-coherent
UFD of characteristic~$2$ in \cite{Glaz2000-FCRZD}.

Closely related is Problem~\ref{prob:35}, which asks whether an
almost GCD (AGCD) domain in which every nonzero $t$-locally principal
ideal is $t$-invertible must be of finite $t$-character. Unlike
Problems~\ref{prob:4ai} and~\ref{prob:4aii}, the answer here
is affirmative, proved using results of Zafrullah on
$t$-invertibility~\cite{Zafrullah-UniversalRestriction,Zafrullah-BazzoniLike}.

Problem~\ref{prob:8a} belongs to the theory of completions of Noetherian local rings.
Anderson~\cite{Anderson2014-QCRM} proved that a ring is quasi-complete if and only if
every homomorphic image is weakly quasi-complete, and asked whether the two properties
actually coincide. We construct a counterexample, thereby answering the question in the
negative. This proof was later auto-formalized in Lean~4 by the Archon system and is
thus formally machine-checked; see~\cite{JuEtAl2026Rethlas}.

Problem~\ref{prob:21} concerns the algebraic structure of rings of
integer-valued polynomials, see~\cite{CahenChabert1997} for background
on $\operatorname{Int}(D)$.
Elliott~\cite{Elliott2010-Plethories} showed that
$\operatorname{Int}(D)$ admits a $D$-$D$-biring structure compatible
with $D[X]\hookrightarrow\operatorname{Int}(D)$ if and only if the
canonical maps
$\operatorname{Int}(D)^{\otimes_D n}\to\operatorname{Int}(D^n)$ are
isomorphisms for all~$n$. Problem~\ref{prob:21} asks whether such a
biring structure always exists. We show it does not by providing a counterexample.

Problem~\ref{prob:37b} asks whether David's $3$-dimensional
non-Noetherian factorial domain~$J$~\cite{David1973} is locally Jaffard.
Dobbs, Fontana, and Kabbaj~\cite{DobbsFontanaKabbaj1990} showed that $J$
is Jaffard, but the locally Jaffard property does not follow formally
from this. We give an affirmative
proof showing that $J$ is indeed locally Jaffard.

Finally, we address two questions on Boij--S\"oderberg
theory~\cite{BoijSoderberg2008,EisenbudSchreyer2009}. A natural problem
in this theory is to determine which integral points on a pure ray of the
Boij--S\"oderberg cone are realizable as Betti table of some module.
Question~\ref{q:bs61} asks whether every such point (in codimension~$3$)
is realized over either $S=k[x,y,z]$ or the enveloping algebra of the
Heisenberg Lie algebra; Question~\ref{q:bs62} asks the analogous question
for arbitrary finite-dimensional positively graded Lie algebras generated
in degree~$1$. Both answers are negative, via constructing explicit degree sequences
whose primitive integral points cannot be realized.

\subsection*{Acknowledgements} The authors would like to thank Shiji Lyu for verifying the proof of Anderson's open problem.
They also thank Haocheng Ju, Shurui Liu, Guoxiong Gao, Leheng Chen, Bin Wu and
Bin Dong for their contributions to the Rethlas project
\cite{JuEtAl2026Rethlas}.

This work is supported in part by the National Key R\&D Program of China grant 2024YFA1014000, the Fundamental and Interdisciplinary Disciplines Breakthrough Plan of the Ministry of Education of China (JYB2025XDXM113), and the New Cornerstone Investigator Program.

\section{Statements of resolved problems}\label{sec:statements}

We list below the open problems resolved by Rethlas, organised by source.
The corresponding proofs appear in \Cref{sec:proofs}.

\subsection{Problems from \emph{Open Problems in Commutative Ring Theory}}
\label{subsec:opcrt}

The reference is the volume \emph{Open Problems in Commutative Ring Theory}
edited by Cahen, Fontana, Frisch and Glaz \cite{CFFG2014}. We retain the
problem numbering of \emph{loc.\ cit.}

\begin{problem}[Problem 4a(i) of \cite{CFFG2014}]\label{prob:4ai}
A ring $R$ is a \emph{finite conductor ring} if $aR\cap bR$ and
$(0:c)$ are finitely generated ideals of $R$ for all $a,b,c\in R$, and a
\emph{G-GCD ring} if principal ideals of $R$ are projective and the
intersection of two finitely generated flat ideals is a finitely generated
flat ideal. Assume that $R$ is a G-GCD ring and $G$ is a finitely generated
abelian group. Does the finite conductor property ascend from $R$ to the
group ring $RG$?
\end{problem}

\begin{problem}[Problem 4a(ii) of \cite{CFFG2014}]\label{prob:4aii}
A ring $R$ is a \emph{quasi coherent ring} if
$a_1 R\cap\cdots\cap a_n R$ and $(0:c)$ are finitely generated ideals of $R$
for all $a_1,\dots,a_n,c\in R$, and a \emph{G-GCD ring} if principal ideals
of $R$ are projective and the intersection of two finitely generated flat
ideals is a finitely generated flat ideal. Assume that $R$ is a G-GCD ring
and $G$ is a finitely generated abelian group. Does the quasi coherent
property ascend from $R$ to the group ring $RG$?
\end{problem}

\begin{problem}[Problem 8a of \cite{CFFG2014} (Anderson's conjecture)]\label{prob:8a}
Let $(R,\mathfrak m)$ be a Noetherian local ring. $R$ is said to be
\emph{quasi-complete} if for every decreasing sequence
$\{I_n\}_{n=1}^\infty$ of ideals of $R$ and every natural number $k$,
there exists $s_k$ with
$I_{s_k}\subseteq \bigl(\bigcap_{n=1}^\infty I_n\bigr)+\mathfrak m^k$;
if this condition is required only for sequences with
$\bigcap_n I_n=(0)$, then $R$ is called \emph{weakly quasi-complete}.
It is known that $R$ is quasi-complete if and only if each homomorphic
image of $R$ is weakly quasi-complete \cite{Anderson2014-QCRM}. Prove
that there exists a weakly quasi-complete ring that is not quasi-complete.
\end{problem}

\begin{problem}[Problem 21 of \cite{CFFG2014}]\label{prob:21}
For an integral domain $D$ with field of fractions $K$, let
\[
   \operatorname{Int}(D)
   :=\{f\in K[X]\mid f(D)\subseteq D\}
\]
denote the ring of integer-valued polynomials on $D$. By
\cite{Elliott2010-Plethories} the existence of a $D$-$D$-biring structure
on $\operatorname{Int}(D)$ such that $D[X]\to\operatorname{Int}(D)$ is a
biring homomorphism is equivalent to the canonical maps
$\operatorname{Int}(D)^{\otimes_D n}\to\operatorname{Int}(D^n)$ being
isomorphisms for all $n$. Does $\operatorname{Int}(D)$ always admit a
unique $D$-$D$-biring structure such that $D[X]\to\operatorname{Int}(D)$
is a homomorphism of $D$-$D$-birings?
\end{problem}

\begin{problem}[Problem 35 of \cite{CFFG2014}]\label{prob:35}
An integral domain $D$ is an \emph{almost GCD (AGCD) domain} if for every
pair $x,y\in D\setminus\{0\}$ there is an integer $n=n(x,y)\ge 1$ with
$x^n D\cap y^n D$ principal. $D$ is of \emph{finite $t$-character} if every
nonzero non-unit of $D$ lies in at most finitely many maximal $t$-ideals.
An ideal $I$ of $D$ is \emph{$t$-locally principal} if $ID_P$ is principal
for every maximal $t$-ideal $P$ of $D$. Let $D$ be an almost GCD domain
such that every nonzero $t$-locally principal ideal of $D$ is
$t$-invertible. Is $D$ of finite $t$-character?
\end{problem}

\begin{problem}[Problem 37b of \cite{CFFG2014}]\label{prob:37b}
A finite-dimensional integral domain $D$ is called a \emph{Jaffard domain}
if $\dim(D[X_1,\dots,X_n])=n+\dim(D)$ for all $n\ge 1$; equivalently, if
$\dim(D)=\dim_v(D)$, where $\dim_v$ denotes the valuative dimension. $D$
is \emph{locally Jaffard} if $D_P$ is Jaffard for every prime ideal $P$ of
$D$. Let $k$ be a field of characteristic zero, let $\{s(n)\}_{n\ge 2}$ be
a sequence of positive integers, and let
\[
   J=\bigcup_{n\ge 1} J_n,\qquad J_n=k[X,\zeta_{n-1},\zeta_n],
\]
where the $\zeta_n$ satisfy the recurrence
\[
   \zeta_n=\frac{\zeta_{n-1}^{s(n)}+\zeta_{n-2}}{X}\qquad(n\ge 2),
\]
so that $J_n\subseteq J\subseteq J_n[X^{-1}]$ for every $n$. This is
David's 3-dimensional factorial domain \cite{David1973}. By
\cite{DobbsFontanaKabbaj1990}, $J$ is Jaffard. Is $J$ locally Jaffard?
\end{problem}

\subsection{Questions on Boij--S\"oderberg theory}
\label{subsec:bs}

The reference is the survey of Erman and Sam
\cite{ErmanSam2017}.

\begin{question}[Question 6.1 of \cite{ErmanSam2017}]\label{q:bs61}
Let $k$ be a field, let $S=k[x,y,z]$ with the standard grading, and let
$\mathfrak H$ be the Heisenberg Lie algebra with basis $\{x,y,z\}$,
bracket $[x,y]=z$, $[x,z]=[y,z]=0$, graded by $\deg(x)=\deg(y)=1$,
$\deg(z)=2$. For every degree sequence $(d_0,d_1,d_2,d_3)$ and every
integral point on the corresponding pure ray in the Boij--S\"oderberg
cone, does there exist a finite length graded module over $S$ or over
$U(\mathfrak H)$ whose Betti table is that integral point?
\end{question}

\begin{question}[Question 6.2 of \cite{ErmanSam2017}]\label{q:bs62}
For every degree sequence $(d_0,\dots,d_n)$ and every integral point on
the corresponding pure ray in the Boij--S\"oderberg cone, does there exist
an $n$-dimensional $\mathbb Z_{>0}$-graded Lie algebra $\mathfrak g$
generated in degree~$1$ over a field $k$, together with a finite length
graded module $M$ over $U(\mathfrak g)$, whose Betti table is that
integral point?
\end{question}

\section{Proofs}\label{sec:proofs}

In this section we give the proofs of the problems stated in
\Cref{sec:statements}. Each subsection treats a single problem: we first
restate the problem for the reader's convenience, then give the proof,
introducing any auxiliary lemmas as needed.

\subsection{Solution to Problem~\ref{prob:4ai}}\label{subsec:proof-4ai}

A ring $R$ is a \emph{finite conductor ring} if $aR\cap bR$ and the
annihilator $(0:c):=\{r\in R:rc=0\}$ are finitely generated ideals of
$R$ for every $a,b,c\in R$; it is a \emph{G-GCD ring} if its principal
ideals are projective and the intersection of two finitely generated
flat ideals is again a finitely generated flat ideal.

\begin{theorem}\label{thm:4ai}
There exists a G-GCD ring $R$ of characteristic $2$ (in particular,
a UFD) and a finitely generated abelian group $G$ such that the group
ring $RG$ is not a finite conductor ring. Consequently, the finite
conductor property does not ascend from G-GCD rings to group rings over
finitely generated abelian groups.
\end{theorem}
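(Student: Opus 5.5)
The plan is to take $R$ to be Glaz's non-coherent UFD of characteristic $2$ from \cite{Glaz2000-FCRZD}, take $G=\mathbb Z/2\mathbb Z=\langle g\rangle$, and show directly that some intersection of two principal ideals of $RG$ is not finitely generated.

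First, $R$ is a G-GCD ring. Being a UFD, it is a GCD domain. Principal ideals of a domain are free, hence projective. In a GCD domain, finitely generated flat ideals are invertible and hence principal, and two principal ideals intersect in the principal ideal generated by an lcm. So the G-GCD condition holds, and the hypotheses of \Cref{prob:4ai} are met.

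Next, exploit characteristic $2$. We have $RG\cong R[T]/(T^2-1)=R[T]/((T+1)^2)$. Putting $\varepsilon=g+1$ gives $RG\cong R[\varepsilon]/(\varepsilon^2)$, which is a free $R$-module with basis $1,\varepsilon$. Since $RG$ is module-finite and free over $R$, an ideal of $RG$ is finitely generated over $RG$ if and only if it is finitely generated as an $R$-submodule of $R^2$. This converts the question into linear algebra over $R$. For $a,b\in R$ with $a\neq0$ we have
\[(a+b\varepsilon)(u+v\varepsilon)=au+(av+bu)\varepsilon.\]
Hence $(a+b\varepsilon)RG$ corresponds to the $R$-submodule $L_{a,b}$ of $R^2$ spanned by $(a,b)$ and $(0,a)$. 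The intersection $(a+b\varepsilon)RG\cap(c+d\varepsilon)RG$ corresponds to $L_{a,b}\cap L_{c,d}$.

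A pair $(x,y)$ lies in $L_{a,b}$ exactly when $x=au$ and $y\equiv bu \pmod{aR}$. So membership in $L_{a,b}\cap L_{c,d}$ amounts to two simultaneous congruence conditions. Projecting to the first coordinate, and then cutting down by the condition on the second coordinate, should express the intersection as an extension of two $R$-modules. The terms of this extension are built from ideals such as $aR\cap cR$ and colon or intersection ideals of the shape $(a,b)\cap(c,d)$, or $(aR:_R b)$-type conductors.

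The key step, and the main obstacle, is choosing $a,b,c,d$ from Glaz's example so that one of these pieces is exactly a non-finitely-generated ideal of $R$. Glaz's ring is non-coherent precisely because some intersection of two finitely generated, non-principal ideals fails to be finitely generated; in a GCD domain all intersections of principal ideals are principal, so the failure must come from two-generated ideals. I would try first to take $(a,b)$ and $(c,d)$ to be the generators of the two ideals in Glaz's witness. Then I would verify that the projection of $L_{a,b}\cap L_{c,d}$ to a suitable coordinate, after dividing out by the lcm of $a$ and $c$, recovers that bad intersection.

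Non-finite generation then descends to the submodule of $R^2$. Finite generation would pass to the relevant quotient or image, and the remaining piece (a principal lcm ideal) is harmless. Hence the intersection of the two principal ideals of $RG$ is not finitely generated, so $RG$ is not a finite conductor ring. Finally, confirming that $(0:c)$ causes no issue is unnecessary, since a single failing intersection suffices.
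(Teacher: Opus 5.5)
Your reduction to $RG\cong R[\varepsilon]/(\varepsilon^2)$ and your G-GCD check are fine. The gap is the decisive step: you never exhibit two principal ideals of $RG$ with a non-finitely-generated intersection, and the plan you sketch for it rests on a wrong premise. Finish the linear algebra you started. For $a,c\neq 0$, put $g=\gcd(a,c)$ and $\ell=\operatorname{lcm}(a,c)$. Projection to the first coordinate exhibits $L_{a,b}\cap L_{c,d}$ as an extension of $\ell\cdot\bigl((a,c)R:_R e\bigr)$ by $0\oplus\ell R$, where $e=(bc-ad)/g$. So the intersection is finitely generated iff the colon ideal $\bigl((a,c)R:_R e\bigr)$ is; it never ``recovers'' an intersection $(a,b)\cap(c,d)$. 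Factoring out $\gcd(g,e)$ reduces this further. Hence $R[\mathbf Z/2\mathbf Z]$ fails to be a finite conductor ring precisely when some $\bigl((a',c')R:_R r\bigr)$ with $\gcd(a',c')=1$ is not finitely generated. That happens precisely when $R$ has a three-generated ideal that is not finitely presented. Annihilators of nonzero elements of $R[\varepsilon]/(\varepsilon^2)$ are $0$ or $\varepsilon RG$, so they cannot help with this group.

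Non-coherence of Glaz's ring only supplies some finitely generated ideal $(a_1,\dots,a_m)$ with non-finitely-generated syzygy module, and gives no control on $m$. Your inference that ``the failure must come from two-generated ideals'' does not follow. In a GCD domain every two-generated ideal $(p,q)$ is finitely presented, since its syzygies are isomorphic to $pR\cap qR$. A non-finitely-generated $(a,b)\cap(c,d)$ would only say that the four-generated ideal $(a,b,c,d)$ is not finitely presented, and that does not by itself produce a bad three-generated ideal. Nothing in your proposal, or in the cited example as used, supplies such an ideal, so the argument with $G=\mathbf Z/2\mathbf Z$ is incomplete.

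The paper avoids this by matching the group to the bad ideal. It takes $G=(\mathbf Z/2\mathbf Z)^m$, $\varepsilon_i=g_i+1$ and $f=\sum a_i\varepsilon_i$. The $\varepsilon$-grading makes $\operatorname{Ann}(f)$ a graded ideal whose degree-$(m-1)$ component is exactly the syzygy module of $(a_1,\dots,a_m)$. So $RG$ fails the annihilator condition $(0:c)$, which is exactly the condition you set aside.
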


\begin{proof}
The argument has three steps.

\smallskip
\noindent\emph{Step 1: a non-coherent UFD of characteristic $2$.}
By Example~9 of \cite{Glaz2000-FCRZD}, the local ring
\[
   A:=S^{\langle\sigma\rangle},\qquad
   S=\mathbf F_2(\{\alpha_i\},\{\beta_i\})[x,y]_{(x,y)},
\]
where $\sigma$ is the order-$2$ automorphism of $S$ defined by
$\sigma(x)=x$, $\sigma(y)=y$, $\sigma(\alpha_i)=\alpha_i+y\rho_{i+1}$,
$\sigma(\beta_i)=\beta_i+x\rho_{i+1}$ with
$\rho_i=\alpha_i x+\beta_i y$, is a local Krull domain of Krull
dimension $2$, is \emph{not} coherent, and is a UFD. In particular $A$
is a UFD, hence a GCD domain, hence a G-GCD domain, hence a finite
conductor ring, of characteristic $2$.

\smallskip
\noindent\emph{Step 2: bad ideal in $A$.}
Since $A$ is a non-coherent domain, some finitely generated ideal of $A$
is not finitely presented (all annihilator ideals $(0:c)$ vanish because
$A$ is a domain). Choose such an ideal
\[
   I=(a_1,\dots,a_m)\subseteq A,
\]
and let
\[
   \varphi\colon A^m\to A,\qquad
   \varphi(\xi_1,\dots,\xi_m)=\sum_{i=1}^m a_i\xi_i,
\]
so that the exact sequence $0\to K\to A^m\to I\to 0$ with $K=\ker\varphi$
shows that $K$ cannot be finitely generated over $A$, since $I$ is not
finitely presented.

\smallskip
\noindent\emph{Step 3: the syzygy is detected in $A[(\mathbf Z/2\mathbf Z)^m]$.}
Set $H=(\mathbf Z/2\mathbf Z)^m$ and $B=A[H]$, with standard generators
$g_1,\dots,g_m$ of $H$. Put $\varepsilon_i:=g_i+1\in B$. Because
$\operatorname{char}(A)=2$ and $g_i^2=1$, we have $\varepsilon_i^2=0$
and the $\varepsilon_i$ commute. The monomials
\[
   \varepsilon_T:=\prod_{i\in T}\varepsilon_i \qquad (T\subseteq\{1,\dots,m\})
\]
form an $A$-basis of $B$, and the resulting decomposition
\[
   B=\bigoplus_{r=0}^m B_r,\qquad
   B_r=\bigoplus_{|T|=r} A\,\varepsilon_T,
\]
is a $\mathbf Z_{\ge0}$-grading. Define
\[
   f:=\sum_{i=1}^m a_i\varepsilon_i\in B_1.
\]
Since multiplication by $f$ raises degree by~$1$, the annihilator
$\operatorname{Ann}_B(f):=\{r\in B:rf=0\}$ is a graded ideal of $B$.
Setting
$w:=\varepsilon_1\cdots\varepsilon_m\in B_m$ and
$v_i:=\prod_{j\ne i}\varepsilon_j\in B_{m-1}$ one has $\varepsilon_i v_i=w$
and $\varepsilon_j v_i=0$ for $j\ne i$, so
\[
   f\Bigl(\sum_{i=1}^m \xi_i v_i\Bigr)=\Bigl(\sum_{i=1}^m a_i\xi_i\Bigr)\,w.
\]
Hence the degree-$(m-1)$ component of $\operatorname{Ann}_B(f)$ is
naturally isomorphic to $K=\ker(\varphi)$. If $\operatorname{Ann}_B(f)$
were finitely generated as an ideal of $B$, then (since $B$ is a
finite free $A$-module) it would be finitely generated as an
$A$-module, and (since it is graded) each of its homogeneous components
would be finitely generated as an $A$-module --- in particular the
degree-$(m-1)$ component, isomorphic to $K$, would be finitely
generated, contradicting Step~2. Therefore $\operatorname{Ann}_B(f)$ is
not finitely generated, and $B=A[H]$ is not a finite conductor ring.

\smallskip
Thus $A$ is a G-GCD ring, the finite abelian group $H$ is finitely
generated, and $A[H]$ is not a finite conductor ring; so the finite
conductor property does not ascend from $R$ to $RG$ in general.
\end{proof}

\subsection{Solution to Problem~\ref{prob:4aii}}\label{subsec:proof-4aii}

A ring $R$ is a \emph{quasi coherent ring} if $a_1R\cap\cdots\cap a_n R$
and the annihilator $(0:c):=\{r\in R:rc=0\}$ are finitely generated
ideals of $R$ for every $a_1,\dots,a_n,c\in R$ (in particular, every
quasi coherent ring is a finite conductor ring). $R$ is a
\emph{G-GCD ring} if its principal ideals are projective and the
intersection of two finitely generated flat ideals is again a finitely
generated flat ideal.

\begin{theorem}\label{thm:4aii}
There exists a G-GCD ring $R$ of characteristic $2$ (in particular,
a UFD) and a finitely generated abelian group $G$ such that the group
ring $RG$ is not a quasi coherent ring. Consequently, the quasi
coherent property does not ascend from G-GCD rings to group rings over
finitely generated abelian groups.
\end{theorem}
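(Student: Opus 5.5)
The plan is to deduce Theorem~\ref{thm:4aii} directly from Theorem~\ref{thm:4ai}. Every quasi coherent ring is a finite conductor ring: taking $n=2$ in the definition shows that $aR\cap bR$ is finitely generated, and the annihilator condition $(0:c)$ is literally the same in both definitions. So a ring that fails to be finite conductor cannot be quasi coherent.

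Concretely, I will use the same data as in the proof of Theorem~\ref{thm:4ai}. Take $R=A=S^{\langle\sigma\rangle}$, Glaz's non-coherent local UFD of characteristic~$2$ from Example~9 of \cite{Glaz2000-FCRZD}, which is a G-GCD ring as explained in Step~1 there. Take $G=H=(\mathbf Z/2\mathbf Z)^m$, where $m$ is the number of generators of a finitely generated ideal $I\subseteq A$ that is not finitely presented; such an $I$ exists because $A$ is a non-coherent domain.

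Step~3 of that proof shows that the element $f=\sum a_i\varepsilon_i\in A[H]$ has an annihilator $(0:f)$ that is not finitely generated. The argument identifies the degree-$(m-1)$ graded piece of $(0:f)$ with the non-finitely-generated syzygy module $K$, and uses that $A[H]$ is finite free over $A$. Since quasi coherence requires every $(0:c)$ to be finitely generated, $A[H]$ is not quasi coherent. This gives the required counterexample, and the ``consequently'' clause follows immediately.

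I expect no real obstacle here, since everything is inherited from Theorem~\ref{thm:4ai}. The only point worth stating explicitly is that the failure in Theorem~\ref{thm:4ai} is a failure of the annihilator condition, not of the intersection condition. That condition is common to both definitions, so the implication ``quasi coherent $\Rightarrow$ finite conductor'' is not even needed in full.
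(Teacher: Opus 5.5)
Your proposal is correct and is essentially the paper's argument. The paper re-runs the same construction ($A=S^{\langle\sigma\rangle}$, $G=(\mathbf Z/2\mathbf Z)^m$, $f=\sum a_i\varepsilon_i$ with non-finitely-generated annihilator) and concludes that $A[G]$ is not finite conductor, hence not quasi coherent. Your remark that the annihilator condition alone fails, and is common to both definitions, is just a slight streamlining of that same final step.
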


\begin{proof}
Let $A$ be the local non-coherent UFD of
characteristic~$2$ supplied by Example~9 of \cite{Glaz2000-FCRZD}: with
\[
   F=\mathbf F_2(\{\alpha_i\},\{\beta_i\}),\qquad S=F[x,y]_{(x,y)},
\]
and the order-$2$ automorphism $\sigma$ of $S$ given by $\sigma(x)=x$,
$\sigma(y)=y$, $\sigma(\alpha_i)=\alpha_i+y\rho_{i+1}$,
$\sigma(\beta_i)=\beta_i+x\rho_{i+1}$ with
$\rho_i=\alpha_i x+\beta_i y$, the fixed ring
$A:=S^{\langle\sigma\rangle}$ is a local Krull domain of Krull
dimension~$2$, is a UFD, and is not coherent. Every UFD is a GCD domain
and hence a G-GCD domain (the intersection of two invertible ideals of a
UFD is principal, hence invertible), so $A$ is a G-GCD ring.

Since $A$ is not coherent, choose a finitely generated ideal
$I=(a_1,\dots,a_m)\subseteq A$ whose syzygy module
\[
   K:=\ker\Bigl(A^m\to A,\ (\xi_1,\dots,\xi_m)\mapsto\sum_{i=1}^m a_i\xi_i\Bigr)
\]
is not finitely generated; this is possible because $A$ is a domain (so
all $(0:c)$ vanish) and the failure of coherence therefore comes from
some finitely generated ideal not being finitely presented.

Set $G=(\mathbf Z/2\mathbf Z)^m$ and $B=A[G]$; then $G$ is a finite,
hence finitely generated, abelian group. With $g_1,\dots,g_m$ the
standard generators of $G$, put $\varepsilon_i=g_i+1\in B$. Since
$\operatorname{char}(A)=2$ and $g_i^2=1$, we have $\varepsilon_i^2=0$
and the $\varepsilon_i$ commute, so the monomials
$\varepsilon_T:=\prod_{i\in T}\varepsilon_i$ ($T\subseteq\{1,\dots,m\}$)
form an $A$-basis of $B$ and the decomposition
$B=\bigoplus_{r=0}^m B_r$ by $r=|T|$ is a $\mathbf Z_{\ge 0}$-grading.
Define
\[
   f:=\sum_{i=1}^m a_i\varepsilon_i\in B_1.
\]
Multiplication by $f\in B_1$ raises grading degree by $1$, so the
annihilator $\operatorname{Ann}_B(f):=\{r\in B:rf=0\}$ is a graded
ideal. With
$w=\varepsilon_1\cdots\varepsilon_m$ and $v_i=\prod_{j\ne i}\varepsilon_j$
one has $\varepsilon_i v_i=w$ and $\varepsilon_j v_i=0$ for $j\ne i$, so
\[
   f\Bigl(\sum_{i=1}^m \xi_i v_i\Bigr)=\Bigl(\sum_{i=1}^m a_i\xi_i\Bigr)w,
\]
which exhibits the degree-$(m-1)$ component of $\operatorname{Ann}_B(f)$ as
naturally isomorphic to $K$. If $\operatorname{Ann}_B(f)$ were
finitely generated as an ideal of $B$, then (since $B$ is a finite free
$A$-module) it would be finitely generated over $A$, and (since it is
graded) each of its homogeneous components --- in particular the
degree-$(m-1)$ component $K$ --- would be finitely generated over $A$,
contradicting the choice of $K$. Hence $\operatorname{Ann}_B(f)$ is
not finitely generated.

Thus $B$ is not a finite conductor ring. By definition every quasi
coherent ring is a finite conductor ring, so $B=A[G]$ is not quasi
coherent. Therefore the quasi coherent property does not ascend from
$R$ to $RG$ in general.
\end{proof}

\subsection{Solution to Problem~\ref{prob:8a}}\label{subsec:proof-8a}

Let $(R,\mathfrak m)$ be a Noetherian local ring, with $\mathfrak m$-adic
completion $\widehat R$. $R$ is \emph{quasi-complete} if for every
decreasing sequence $\{I_n\}_{n=1}^\infty$ of ideals of $R$ and every
natural number $k$ there exists $s_k$ with
$I_{s_k}\subseteq(\bigcap_n I_n)+\mathfrak m^k$; if this condition is
required only for sequences with $\bigcap_n I_n=(0)$, $R$ is called
\emph{weakly quasi-complete}. By \cite{Anderson2014-QCRM}, $R$ is
quasi-complete if and only if every homomorphic image of $R$ is weakly
quasi-complete. When $R$ is a domain, its \emph{generic formal fiber}
is the localization $\widehat R\otimes_R\operatorname{Frac}(R)$; a
Noetherian local domain is \emph{analytically irreducible} if its
completion is again a domain.

\begin{theorem}\label{thm:8a}
There exists a Noetherian local ring that is weakly quasi-complete but
not quasi-complete.
\end{theorem}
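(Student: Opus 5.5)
The approach is to combine Chevalley's lemma in the completion $\widehat R$ with a generic formal fiber condition. I will build $R$ as a Noetherian local domain such that $R$ itself is weakly quasi-complete, but some homomorphic image $R/P$ is not. By the criterion of \cite{Anderson2014-QCRM} quoted above, $R$ is then not quasi-complete.

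\emph{Step 1: a sufficient condition for weak quasi-completeness.} Suppose $R$ is a Noetherian local domain whose generic formal fiber $\widehat R\otimes_R\operatorname{Frac}(R)$ is a field, namely $\operatorname{Frac}(\widehat R)$. I claim $R$ is weakly quasi-complete. Let $\{I_n\}$ be decreasing with $\bigcap_n I_n=0$, and put $J=\bigcap_n I_n\widehat R$. If $J\neq 0$, then $J\cap R\neq 0$: otherwise $J$ would survive in the localization at $R\setminus\{0\}$, hence lie in a nonzero prime of $\widehat R$ contracting to $(0)$, which is impossible since the only such prime is $(0)$. On the other hand, faithful flatness gives $I_n\widehat R\cap R=I_n$, so $J\cap R\subseteq\bigcap_n I_n=0$. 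Hence $J=0$. Chevalley's lemma in the complete ring $\widehat R$ then gives, for each $k$, some $s_k$ with $I_{s_k}\widehat R\subseteq\widehat{\mathfrak m}^k$. Contracting yields $I_{s_k}\subseteq\mathfrak m^k$.

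\emph{Step 2: analytic reducibility destroys weak quasi-completeness.} Let $D$ be a one-dimensional Noetherian local domain whose completion has a nonzero minimal prime $Q$; this happens, for example, when $\widehat D$ has two minimal primes. Set $I_n:=(Q+\widehat{\mathfrak m}^n)\cap D$. Since $Q+\widehat{\mathfrak m}^n$ is $\widehat{\mathfrak m}$-primary, it is extended from $D$, so $I_n\widehat D=Q+\widehat{\mathfrak m}^n$. Therefore $\bigcap_n I_n=\bigcap_n(Q+\widehat{\mathfrak m}^n)\cap D=Q\cap D=0$, using Krull's intersection theorem in $\widehat D/Q$ and the fact that minimal primes of $\widehat D$ contract to $(0)$. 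Since $Q\neq0$, Krull's theorem gives $k$ with $Q\not\subseteq\widehat{\mathfrak m}^k$. Then $I_n\widehat D\not\subseteq\widehat{\mathfrak m}^k$ for every $n$, so $I_n\not\subseteq\mathfrak m^k$ for every $n$. Thus $D$ is not weakly quasi-complete.

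\emph{Step 3: the construction (main obstacle).} It remains to produce a two-dimensional Noetherian local domain $R$ with two properties. First, its generic formal fiber is a field; equivalently, every nonzero prime of $\widehat R$ meets $R$ nontrivially. Second, it has a height-one prime $P$ such that $R/P$ is analytically reducible. I would use the precompletion machinery of Heitmann and Loepp (and Loepp--Rotthaus). Fix a complete local ring $T$, say $T=k[[x,y,z]]/(z^2-xy)$ or $k[[x,y]]$, together with a prime $\mathfrak q$ of $T$ such that $T/\mathfrak q$ is not a domain after the relevant identification; the cleaner formulation is an ideal $\mathfrak a=Q_1\cap Q_2$ with $Q_1\neq Q_2$ height-one primes. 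Then build a countable-type subring $R\subseteq T$ with $\widehat R=T$ by adjoining elements so that two conditions hold:

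(a) every nonzero prime of $T$ meets $R$ nontrivially;

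(b) $\mathfrak a\cap R=P$ is prime with $\mathfrak aT=PT$, which forces $\widehat{R/P}=T/\mathfrak a$ to be reducible.

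Condition (a) for all primes of $T$ at once is the delicate part. Over an uncountable residue field it is handled by the standard avoidance of countably many cosets. I expect verifying that these constructions can simultaneously enforce (a) and (b) to be the hardest step. If that fails, I would instead try to realize $R$ as a localization of a suitable excellent-failing subring of $k[[x,y]]$ in the style of Nagata's examples.
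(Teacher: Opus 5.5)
\paragraph{Gap: the ring $R$ is never constructed.} Your Steps 1 and 2 are correct. They are self-contained proofs of the two criteria the paper only cites. One is the relevant direction of Farley's characterization of weak quasi-completeness via primes of $\widehat R$ lying over $(0)$. The other is the fact that an analytically reducible local domain is not weakly quasi-complete.

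The gap is Step 3, which carries all the content of the theorem: you never produce $R$. You propose to enforce (a) and (b) simultaneously in a Heitmann--Loepp style precompletion construction. You give no argument that this can be done, and you say yourself that you are unsure. As written, the existence of a two-dimensional local domain with trivial generic formal fiber and an analytically reducible height-one quotient is assumed, not proved. Separately, ``a prime $\mathfrak q$ of $T$ such that $T/\mathfrak q$ is not a domain'' is self-contradictory.

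\paragraph{The missing idea.} Condition (b) need not be built into the construction; it follows from (a) together with factoriality of $R$. Take a complete local domain that is \emph{not} a UFD, e.g.\ $T=\mathbf C[[x,y,z]]/(x^2-yz)$. Here $Q=(x,y)T$ is a height-one prime that needs two generators. Moreover $T$ is a Cohen--Macaulay domain of depth $2$ with $|T|=|T/\mathfrak m|$. So \cite[Corollary~2.4]{Jensen2006-UFD} gives a local UFD $A$ with $\widehat A\cong T$ whose generic formal fiber is a field, which is exactly your (a).

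Now argue as follows.
\begin{enumerate}[(1)]
\item By (a), $Q\cap A\neq 0$.
\item By flatness, $\operatorname{ht}(Q\cap A)\le \operatorname{ht}(Q)=1$, so $Q\cap A$ has height one.
\item Since $A$ is a UFD, $Q\cap A=aA$ for a prime element $a$.
\item If $aT$ were prime, it would be a nonzero prime inside the height-one prime $Q$, hence equal to $Q$. This contradicts that $Q$ is not principal.
\end{enumerate}
Hence $\widehat{A/aA}\cong T/aT$ is not a domain. Your Step 2 then applies to $D=A/aA$, and your Step 1 applies to $A$. This shortcut requires $T$ to be non-factorial, so your alternative $T=k[[x,y]]$ would not work with it: every height-one prime of a regular local ring is principal.
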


\begin{proof}
We exhibit a $2$-dimensional local UFD $A$ such that $A$ is weakly
quasi-complete but some homomorphic image of $A$ is not weakly
quasi-complete. By the criterion that $R$ is quasi-complete if and only
if every homomorphic image of $R$ is weakly quasi-complete
\cite{Anderson2014-QCRM}, such an $A$ is not quasi-complete.

\smallskip
\noindent\emph{Step 1: choice of a complete target ring.} Let
\[
   T:=\mathbf C[[x,y,z]]/(x^2-yz),\qquad
   \mathfrak m:=(x,y,z)T.
\]
The substitution $x\mapsto uv$, $y\mapsto u^2$, $z\mapsto v^2$
identifies $T$ with the subring $\mathbf C[[u^2,uv,v^2]]\subset
\mathbf C[[u,v]]$, so $T$ is a domain. Since $x^2-yz$ is a non-zero
divisor in the regular local ring $\mathbf C[[x,y,z]]$, the quotient
$T$ is a $2$-dimensional Cohen--Macaulay hypersurface, in particular a
complete $2$-dimensional Cohen--Macaulay local domain. Also
$T/\mathfrak m\cong\mathbf C$ and a formal power series ring in finitely
many variables over an uncountable field has the same cardinality as the
field, so $|T|=|T/\mathfrak m|=|\mathbf C|$. Finally, the height-one
prime $Q=(x,y)T$ is not principal: $T/Q\cong\mathbf C[[z]]$, and the
images of $x$ and $y$ in $Q/\mathfrak m Q$ are $\mathbf C$-linearly
independent (because $x,y\notin\mathfrak m^2$, the defining relation
being quadratic), so the minimal number of generators
$\mu(Q):=\dim_{T/\mathfrak m}(Q/\mathfrak m Q)\ge 2$.

\smallskip
\noindent\emph{Step 2: a local UFD with completion $T$ and trivial generic
formal fiber.} By \cite[Corollary~2.4]{Jensen2006-UFD}, there exists a
local UFD $A$ such that $\widehat A=T$ and the generic formal fiber of
$A$ is local with maximal ideal $P=(0)$, provided $(T,\mathfrak m)$ is
a complete local ring with $|T|=|T/\mathfrak m|$, $T$ has depth at least
two, $P=(0)$ is non-maximal and contains all associated primes of $T$
and meets the prime subring trivially, and for every
$J\in\operatorname{Spec}T$ with $\operatorname{ht}(J)>\operatorname{depth}(T_J)=1$ one
has $J\subseteq P$. All these hypotheses hold for the ring $T$ of Step~1
and $P=(0)$: $T$ is a complete Cohen--Macaulay domain of depth $2$, the
unique associated prime is $(0)\subset P$, and the Cohen--Macaulay
property excludes any prime $J$ with
$\operatorname{ht}(J)>\operatorname{depth}(T_J)=1$. Hence a local UFD $A$
with $\widehat A\cong T$ and local generic formal fiber with maximal ideal
$(0)$ exists.

\smallskip
\noindent\emph{Step 3: $A$ is weakly quasi-complete.} Because the generic
formal fiber of $A$ is local with maximal ideal $(0)$, the only prime of
$\widehat A\cong T$ contracting to $(0)\subset A$ is $(0)$ itself; hence
$\mathfrak p\cap A\ne(0)$ for every nonzero prime $\mathfrak p$ of
$\widehat A$. By the criterion that a Noetherian local integral domain
$R$ is weakly quasi-complete iff $\mathfrak p\cap R\ne(0)$ for every
nonzero prime $\mathfrak p$ of $\widehat R$
\cite[Proposition~1]{Farley-QCLocalizations}, $A$ is weakly quasi-complete.

\smallskip
\noindent\emph{Step 4: a homomorphic image of $A$ that fails weak
quasi-completeness.} Consider the height-one prime $Q=(x,y)T$ from
Step~1. Since $Q\ne(0)$, the triviality of the generic formal fiber
gives $\mathfrak q:=Q\cap A\ne(0)$. Faithful flatness of $\widehat A$
over $A$ yields
\[
   1\le\operatorname{ht}(\mathfrak q)\le\operatorname{ht}(Q)=1,
\]
so $\operatorname{ht}(\mathfrak q)=1$. As $A$ is a UFD, $\mathfrak q=aA$ for a
prime element $a\in A$. We claim that $aT$ is not prime in $T$: it is
contained in $Q$ and, if it were prime, the inclusion of height-one primes
would force $aT=Q$, contradicting the non-principality of $Q$.

Therefore $T/aT$ is not a domain. Since completion commutes with
quotient by a single element in a Noetherian local ring,
\[
   \widehat{A/aA}\cong\widehat A/a\widehat A\cong T/aT,
\]
so $\widehat{A/aA}$ is not a domain. As $a$ is prime in the
$2$-dimensional UFD $A$, the quotient $A/aA$ is a $1$-dimensional
Noetherian local domain that is not analytically irreducible. By the
recorded equivalence \cite[Corollary~2.2]{Anderson2014-QCRM} that a $1$-dimensional
Noetherian local domain is (weakly) quasi-complete iff it is
analytically irreducible, $A/aA$ is not weakly quasi-complete.

\smallskip
\noindent\emph{Conclusion.} $A$ is a weakly quasi-complete Noetherian
local ring with a homomorphic image $A/aA$ that is not weakly
quasi-complete. By the equivalence between quasi-completeness and
weak quasi-completeness of all homomorphic images
\cite{Anderson2014-QCRM}, $A$ is not quasi-complete.
\end{proof}

\noindent The above solution has been formalized and machine-verified
in Lean~4; see \cite{JuEtAl2026Rethlas}.


\subsection{Solution to Problem~\ref{prob:21}}\label{subsec:proof-21}

For an integral domain $D$ with fraction field $K=\operatorname{Frac}(D)$,
the ring of \emph{integer-valued polynomials} on $D$ is
\[
   \operatorname{Int}(D):=\{f\in K[X]:f(D)\subseteq D\},
\]
and more generally
$\operatorname{Int}(D^n):=\{f\in K[X_1,\dots,X_n]:f(D\times\cdots\times D)\subseteq D\}$
is the analogous ring of integer-valued polynomials in $n$ variables.
A \emph{$D$-$D$-biring} is a commutative $D$-algebra equipped with a
compatible $D$-coalgebra structure whose comultiplication and counit
are $D$-algebra homomorphisms; the definitions and characterizations
used below are from
\cite{Elliott2010-Plethories}. We use in
particular the notion of a \emph{weakly polynomially complete}
$D$-subalgebra of $\operatorname{Int}(D^n)$ and that of a \emph{weakly
polynomially composite} domain (both in the sense of \emph{loc.\
cit.}). The key fact used at the end of the proof is
\cite[Theorem~12(1)]{Elliott2010-Plethories}: if $\operatorname{Int}(D)$
admits a $D$-$D$-biring structure for which the polynomial inclusion
$D[X]\to\operatorname{Int}(D)$ is a biring homomorphism, then $D$ must
be weakly polynomially composite.

\begin{theorem}\label{thm:21}
There exists an integral domain $D$ such that $\operatorname{Int}(D)$
does not admit any $D$-$D$-biring structure for which the inclusion
$D[X]\to\operatorname{Int}(D)$ is a homomorphism of $D$-$D$-birings.
\end{theorem}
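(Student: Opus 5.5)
The plan is to go through the key fact recalled just before the statement, \cite[Theorem~12(1)]{Elliott2010-Plethories}. If $\operatorname{Int}(D)$ carries a $D$-$D$-biring structure making $D[X]\to\operatorname{Int}(D)$ a biring homomorphism, then $D$ is weakly polynomially composite. So it suffices to exhibit a domain $D$ that is \emph{not} weakly polynomially composite. Unwinding Elliott's definition, the requirement is that $\operatorname{Int}(D)$ be closed under composition in a suitable weak sense. Concretely, the comultiplication $\Delta(f)=f(X_1+X_2)$ (and similarly for $X_1X_2$) must land in $\operatorname{Int}(D)\otimes_D\operatorname{Int}(D)$. More generally, the canonical map $\operatorname{Int}(D)^{\otimes_D n}\to\operatorname{Int}(D^n)$ must hit the polynomials $f(g_1,\dots,g_n)$ that arise. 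I would aim to violate this in the simplest possible place: the comultiplication.

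For the construction, I would take a domain where $\operatorname{Int}(D)$ is as large as possible yet badly behaved in two variables. The natural candidate is a non-Noetherian or non-Krull example. The cleanest route is a domain where $\operatorname{Int}(D)=D[X]$ fails to be "trivial" in one variable but $\operatorname{Int}(D)\otimes_D\operatorname{Int}(D)$ is strictly smaller than $\operatorname{Int}(D^2)$ on the relevant elements.

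One concrete attempt uses a local domain $D$ with finite residue field $\mathbf F_q$ and a non-principal maximal ideal. For instance, take $D=\mathbf F_2+ t\,\mathbf F_4[[t]]$ or a pullback such as $D=\mathbf Z+ X\mathbf Q[X]$-type constructions. The idea is that in such rings $\operatorname{Int}(D)$ contains a polynomial like $f=(X^q-X)/\pi$ only after inverting a non-principal element. Then $f(X_1+X_2)-f(X_1)-f(X_2)$ involves cross terms with denominators that cannot be split as sums of products $g(X_1)h(X_2)$ with $g,h\in\operatorname{Int}(D)$.

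The cleanest fallback is a domain with $\operatorname{Int}(D)\ne D[X]$ that is not flat-friendly. For example, take $D=V$, a rank-one non-discrete valuation domain with finite residue field. Here $\operatorname{Int}(V)$ is explicitly describable via Chabert's results: the elements are generated by $(X^q-X)/a$ for all $a$ with $v(a)<v$ of something. The test is whether $f(X_1+X_2)$ lies in the image of the tensor product. Since $V$ is a valuation domain, $\operatorname{Int}(V)$ is flat, so the tensor map is injective. The question is surjectivity onto the specific element $f(X_1+X_2)$. I would check this by reducing modulo the maximal ideal and comparing degree-filtered pieces: the tensor product's degree-$(\le d)$ part is a sum of $\operatorname{Int}_{\le i}\otimes\operatorname{Int}_{\le d-i}$, and I would compute the fractional ideals of leading coefficients on both sides.

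The main obstacle is this last computation: proving that some explicit element of $\operatorname{Int}(D^2)$ of the form $f(X_1+X_2)$ or $f(X_1X_2)$, with $f\in\operatorname{Int}(D)$, is not in the image of $\operatorname{Int}(D)\otimes_D\operatorname{Int}(D)$. I would first try to compare "characteristic ideals" of leading coefficients in bidegree $(a,b)$. On the tensor side these are the products $\mathfrak I_a\mathfrak I_b$, where $\mathfrak I_d$ is the ideal of leading coefficients of degree-$d$ elements of $\operatorname{Int}(D)$. I would look for a bidegree where the coefficient of $X_1^aX_2^b$ in $f(X_1+X_2)$ has value exceeding what $\mathfrak I_a\mathfrak I_b$ allows. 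In the non-discrete valuation case, the infimum defining $\mathfrak I_d$ not being attained, while $\mathfrak I_a\mathfrak I_b\subsetneq\mathfrak I_{a+b}$, is the phenomenon I expect to exploit. Once this fails, Elliott's Theorem~12(1) gives that $D$ is not weakly polynomially composite, and hence no biring structure exists, proving the theorem.
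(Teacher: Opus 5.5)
\textbf{There is a genuine gap.} Your reduction to Elliott's Theorem~12(1) is the one the paper uses: exhibit some $P=g(XY)$ or $g(X+Y)$ in $\operatorname{Int}(D^2)$ outside the image of $\operatorname{Int}(D)\otimes_D\operatorname{Int}(D)$. But you never fix a domain $D$, and you never prove such a non-membership. You call this ``the main obstacle'', yet it is the entire content of the theorem.

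Worse, the one candidate you actually develop cannot work. A rank-one non-discrete valuation domain $V$ has non-principal maximal ideal $\mathfrak m$, and then $\operatorname{Int}(V)=V[X]$ (Cahen--Chabert). To see this, take a primitive $g\in V[X]$ and $d\in\mathfrak m\setminus\{0\}$, and pick any $x_0\in V$. Write $g(x_0+Y)=\sum_j b_jY^j$, which is still primitive, and let $j_0$ be the least index with $b_{j_0}$ a unit. Choose $y\in\mathfrak m$ with $v(y)>0$ so small that $j_0v(y)<v(d)$ and $j_0v(y)<v(b_j)+jv(y)$ for all $j<j_0$. The terms with $j>j_0$ have value at least $jv(y)$, so $v(g(x_0+y))=j_0v(y)<v(d)$, and hence $g/d\notin\operatorname{Int}(V)$. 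Already $x(x+1)$ takes values of arbitrarily small positive valuation, so no $(X^q-X)/a$ with $a\in\mathfrak m$ is integer-valued; your description of $\operatorname{Int}(V)$ is wrong.

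Consequences for your plan:
\begin{enumerate}[(i)]
\item Every characteristic ideal $\mathfrak I_d$ equals $V$, so the strict inclusion $\mathfrak I_a\mathfrak I_b\subsetneq\mathfrak I_{a+b}$ you want to exploit never occurs.
\item $\operatorname{Int}(V^n)=V[X_1,\dots,X_n]=\operatorname{Int}(V)^{\otimes_V n}$, and $\operatorname{Int}(V)=V[X]$ carries the obvious biring structure. So this $V$ is a positive instance, not a counterexample.
\item Your other candidates ($\mathbf F_2+t\mathbf F_4[[t]]$, $\mathbf Z+X\mathbf Q[X]$) are left unexamined.
\end{enumerate}

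\textbf{The missing idea.} You need a mechanism forcing every finite sum $\sum_i f_i(X)h_i(Y)$ with $f_i,h_i\in\operatorname{Int}(D)$ to satisfy a congruence that some explicit element of $\operatorname{Int}(D^2)$ violates. The paper gets this from a local domain with two branches. Take $m=t(t+1)$, let $T$ be the semilocalization of $\mathbf F_2[t]$ at $t$ and $t+1$, and set $D=\mathbf F_2+mT$ with maximal ideal $M=mT$. Then $D$ carries two valuations $v_0,v_1$, both centred on $M$.
\begin{enumerate}[(i)]
\item Finitely many elements of $\operatorname{Int}(D)$ have bounded $v_1$-denominators. So $u=t(t+1)^{n+1}$ with $n\gg0$ is $v_1$-adically close enough to $0$ that $f_i(u)-f_i(0)\in M$ and $h_i(u)-h_i(0)\in M$ for all $i$.
\item Hence $P(u,u)-P(u,0)-P(0,u)+P(0,0)\in M^2$ for every $P$ in the image.
\item For $P=g(XY)$ with $g=q^2+q$ and $q=(X^2+X)/m$, this mixed difference equals $g(u^2)$. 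Since $v_0(u)=1$, one gets $g(u^2)\notin M^2$, a contradiction.
\end{enumerate}

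This obstruction is invisible to your leading-coefficient test. The coefficient of $X^4Y^4$ in $g(XY)$ is $m^{-2}=m^2\cdot m^{-4}$, and $m^{-4}$ is the corresponding coefficient of $g(X)g(Y)$. So it already lies in $\mathfrak I_4\mathfrak I_4$.
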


\begin{proof}
We exhibit an explicit domain $D$ over which the conclusion fails. Set
\[
   k:=\mathbf F_2,\quad A:=k[t],\quad S:=A\setminus\bigl((t)\cup(t+1)\bigr),
   \quad T:=S^{-1}A,
\]
and
\[
   N_0:=tT,\quad N_1:=(t+1)T,\quad m:=t(t+1),\quad M:=mT=N_0 N_1,\quad
   D:=k+M\subset T.
\]
Let $v_0$ and $v_1$ be the discrete valuations on $K=\operatorname{Frac}(T)$
extending the valuations of the local rings $T_{N_0}$ and $T_{N_1}$,
normalized by $v_0(t)=1$ and $v_1(t+1)=1$.

\smallskip
\noindent\emph{Step 1: structure of $D$.} The ring $T$ is a semilocal PID
with maximal ideals $N_0$ and $N_1$, and these are comaximal, so
$M=N_0\cap N_1=N_0 N_1=mT$. The composite $D\hookrightarrow T\twoheadrightarrow T/N_i$ has
image $\mathbf F_2$, so $N_i\cap D=M$; in particular $D$ is local with
maximal ideal $M$, residue field $D/M\cong\mathbf F_2$, and $D$ is a
domain. For every $n\ge 1$,
\[
   u_n:=t(t+1)^{n+1}=m(t+1)^n\in M\setminus M^2,
\]
because $v_0(u_n)=1$ while every element of $M^2=m^2 T$ has $v_0$-value
at least~$2$.

\smallskip
\noindent\emph{Step 2: a polynomial $g\in\operatorname{Int}(D)$ with
$g(u_n^2)\notin M^2$.} Define
\[
   q(X):=\frac{X^2+X}{m}\in K[X],\qquad
   g(X):=q(X)^2+q(X)\in K[X],
\]
where $K=\operatorname{Frac}(D)=\operatorname{Frac}(T)$. For $x\in D$ the
residue class of $x$ modulo $M$ lies in $\mathbf F_2$, so $x^2+x\in
M=mT$, whence $q(x)\in T$. Reducing $q(x)\in T$ modulo $N_0$ and
modulo $N_1$ lands in $\mathbf F_2$, so $q(x)^2+q(x)\in N_0\cap N_1=M$;
that is, $g(x)\in M\subset D$. Hence $g\in\operatorname{Int}(D)$ and
$g(D)\subseteq M$. For $u_n^2=m^2(t+1)^{2n}=m^2 w^2$ with $w=(t+1)^n$,
\[
   q(u_n^2)=\frac{u_n^4+u_n^2}{m}=m w^2+m^3 w^4\in M,
\]
and the first summand has $v_0$-value $1$ while the second has
$v_0$-value at least $3$; hence $v_0(q(u_n^2))=1$, i.e.
$q(u_n^2)\in M\setminus M^2$. Since $q(u_n^2)^2\in M^2$, we get
$g(u_n^2)\equiv q(u_n^2)\pmod{M^2}$, so $g(u_n^2)\notin M^2$.

\smallskip
\noindent\emph{Step 3: a difference criterion.} For any finite set
$H\subseteq\operatorname{Int}(D)$ there exists $n\ge 1$ such that
$h(u_n)-h(0)\in M$ for every $h\in H$. Indeed, writing
$h(X)=\sum_{r\ge 0} c_{h,r}X^r$ with $c_{h,r}\in K$, choose $n$ so that
$v_1(c_{h,r})\ge -(n-1)$ for all $h\in H$ and all $r\ge 1$ (possible
because $H$ is finite). Then for $r\ge 1$,
\[
   v_1(c_{h,r}u_n^r)\ge -(n-1)+r(n+1)\ge 2,
\]
so $h(u_n)-h(0)=\sum_{r\ge 1} c_{h,r} u_n^r\in N_1$; as
$h(u_n),h(0)\in D$ this lies in $N_1\cap D=M$.

\smallskip
\noindent\emph{Step 4: $g(XY)\notin\operatorname{im}\theta_2$.} Let
$\theta_2\colon\operatorname{Int}(D)\otimes_D\operatorname{Int}(D)\to
\operatorname{Int}(D^2)$ be the canonical $D$-algebra map, whose image,
under the usual identification of $\operatorname{Int}(D^2)$ with a subring
of $K[X,Y]$, is the set of finite sums $\sum_i f_i(X)h_i(Y)$ with
$f_i,h_i\in\operatorname{Int}(D)$. Set $P(X,Y):=g(XY)$; since $g\in
\operatorname{Int}(D)$ and $XY$ maps $D^2$ into $D$, we have
$P\in\operatorname{Int}(D^2)$.

Suppose, for contradiction, that $P=\sum_{i=1}^N f_i(X)h_i(Y)$ with
$f_i,h_i\in\operatorname{Int}(D)$. Apply Step~3 to
$H=\{f_1,\dots,f_N,h_1,\dots,h_N\}$ to obtain $n\ge 1$ and $u:=u_n$ with
$f_i(u)-f_i(0)\in M$ and $h_i(u)-h_i(0)\in M$ for every $i$. Then
\[
   P(u,u)-P(u,0)-P(0,u)+P(0,0)
   =\sum_{i=1}^N \bigl(f_i(u)-f_i(0)\bigr)\bigl(h_i(u)-h_i(0)\bigr)\in M^2.
\]
On the other hand $g(0)=0$, so $P(u,0)=P(0,u)=P(0,0)=0$ and the left side
equals $g(u^2)$, which is not in $M^2$ by Step~2. This contradiction shows
$P=g(XY)\notin\operatorname{im}\theta_2$.

\smallskip
\noindent\emph{Step 5: conclusion.} Since $XY=\theta_2(X\otimes X)\in
\operatorname{im}\theta_2$ but $g(XY)\notin\operatorname{im}\theta_2$,
the subring $\operatorname{im}\theta_2$ of $\operatorname{Int}(D^2)$ is
not weakly polynomially complete over $D$. By the terminology of
\cite{Elliott2010-Plethories}, $D$ is therefore not
weakly polynomially composite. By \cite[Theorem~12(1)]{Elliott2010-Plethories}, if $\operatorname{Int}(D)$ admitted a
$D$-$D$-biring structure such that $D[X]\to\operatorname{Int}(D)$ is a
biring homomorphism, then $D$ would be weakly polynomially composite.
Hence $\operatorname{Int}(D)$ does not admit such a structure.
\end{proof}

\subsection{Solution to Problem~\ref{prob:35}}\label{subsec:proof-35}

We collect the star-operation notions used below. Let $D$ be an
integral domain with fraction field $K$, and let $J\subseteq K$ be a
fractional ideal of $D$ (i.e.\ a $D$-submodule with
$dJ\subseteq D$ for some $d\in D\setminus\{0\}$). Set
$J^{-1}:=\{x\in K:xJ\subseteq D\}$; the \emph{$t$-closure} of $J$ is
\[
   J_t:=\bigcup_{F\subseteq J\ \text{finitely generated}}(F^{-1})^{-1}.
\]
$J$ is a \emph{$t$-ideal} if $J=J_t$, and a $t$-ideal of \emph{finite
type} if $J=F_t$ for some finitely generated $F$. A \emph{maximal
$t$-ideal} is an integral $t$-ideal maximal among proper integral
$t$-ideals; we write $D_P$ for the localization of $D$ at a (maximal)
$t$-ideal $P$. The fractional ideal $J$ is \emph{$t$-invertible} if
$(JJ^{-1})_t=D$, and is \emph{$t$-locally principal} if $JD_P$ is
principal for every maximal $t$-ideal $P$ of $D$. Two integral
$t$-ideals are \emph{$t$-comaximal} if no maximal $t$-ideal contains
both. The domain $D$ is of \emph{finite $t$-character} if every
nonzero non-unit of $D$ lies in only finitely many maximal $t$-ideals.

$D$ is an \emph{almost GCD} (\emph{AGCD}) \emph{domain} if for every
pair $x,y\in D\setminus\{0\}$ there is an integer $n=n(x,y)\ge 1$ with
$x^n D\cap y^n D$ principal. By
\cite[Lemma~4.3]{Zafrullah-UniversalRestriction}, every AGCD domain is
an \emph{APVMD} (almost Pr\"ufer $v$-multiplication domain --- for every
finite set $x_1,\dots,x_n\in D\setminus\{0\}$ there exists $m\ge 1$
with $(x_1^m,\dots,x_n^m)_t$ a $t$-invertible $t$-ideal) and a
\emph{$t$-SAB domain} (in the sense of \emph{loc.\ cit.}).

\begin{theorem}\label{thm:35}
Let $D$ be an almost GCD domain such that every nonzero $t$-locally
principal ideal of $D$ is $t$-invertible. Then $D$ is of finite
$t$-character.
\end{theorem}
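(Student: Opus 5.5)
The plan is to reduce the statement to a known characterization of finite $t$-character in terms of $t$-locally principal ideals. The paper has already noted, via \cite[Lemma~4.3]{Zafrullah-UniversalRestriction}, that an AGCD domain $D$ is an APVMD and a $t$-SAB domain. I will use the following result of Zafrullah \cite{Zafrullah-BazzoniLike}, a Bazzoni-type theorem: if $D$ is a domain in which every maximal $t$-ideal $P$ has $D_P$ well-behaved enough (for instance, $D_P$ an almost valuation/almost GCD local ring, or more generally a $t$-SAB/APVMD setting), then $D$ is of finite $t$-character exactly when every $t$-locally principal ($t$-)ideal is $t$-invertible. In the Prüfer case this is Bazzoni's conjecture (Holland--Martinez--McGovern--Tesemma), and the $t$-version for PVMDs is due to Finocchiaro, Picozza and Tartarone. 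The first concrete step is therefore to check that the hypotheses of Zafrullah's Bazzoni-like theorem are met by AGCD domains. This should follow from the facts that $D$ is a $t$-SAB domain and an APVMD.

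If the citation does not apply verbatim, I will fall back on the standard direct argument. Take a nonzero non-unit $x$ and suppose it lies in infinitely many maximal $t$-ideals. The first step is to use the APVMD property: for each finite set of elements, a suitable power of the generated ideal is $t$-invertible. Combined with the AGCD property, this shows that each localization $D_P$ at a maximal $t$-ideal is an AGCD domain whose $t$-invertible ideals become principal up to powers, essentially an "almost valuation domain" locally. The second step is to show that, in such a domain, $x$ can be assigned a $t$-ideal with the following properties: the ideal is $t$-locally principal, it is determined by the local behaviour of $x$ at each maximal $t$-ideal containing it, and it is not of finite type when infinitely many maximal $t$-ideals contain $x$. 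The model is Bazzoni's construction $I=\sum_{P\ni x} \dots$. More concretely, I would try to use the $t$-SAB property to split the set of maximal $t$-ideals containing $x$ into two infinite families of pairwise $t$-comaximal pieces. I would then build $I$ as the $t$-closure of a sum of ideals $x D_{P}\cap D$ (or of powers such as $x^{n}$), chosen to agree with $xD_P$ at one family and with $D_P$ at the other. By the hypothesis, $I$ is then $t$-invertible, hence of finite type. The finitely many generators of $I$ can only "see" finitely many of the maximal $t$-ideals, and this gives the contradiction.

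The main obstacle is the middle step: producing, from infinitely many maximal $t$-ideals containing $x$, a $t$-locally principal ideal whose $t$-invertibility forces finiteness. This is exactly where the AGCD hypothesis is needed, since in a general domain the localizations need not be valuation-like. The $t$-SAB property (every $t$-ideal of finite type of $D_P$ comparable up to radicals, or "$t$-splitting" behaviour) is what allows the local pieces to be glued. My first attempt will be to invoke Zafrullah's theorem in \cite{Zafrullah-BazzoniLike} directly for $t$-SAB APVMDs, which I expect states precisely that "$t$-locally principal implies $t$-invertible" is equivalent to finite $t$-character. This would reduce the proof to the two-line observation that AGCD domains are $t$-SAB APVMDs by \cite[Lemma~4.3]{Zafrullah-UniversalRestriction}.
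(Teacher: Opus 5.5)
Your proposal has a genuine gap. The main route rests on a theorem in \cite{Zafrullah-BazzoniLike} whose content you only conjecture (``which I expect states precisely that\dots''). The fallback stops exactly at the step you yourself flag as the main obstacle. The PVMD version of Bazzoni's statement does not apply directly either, since an AGCD domain need not be integrally closed.

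The missing idea is how to turn failure of finite $t$-character into the configuration a Bazzoni-type gluing needs: a nonzero integral $t$-invertible $t$-ideal $C$ contained in infinitely many mutually $t$-comaximal $t$-invertible $t$-ideals. The paper does this in two moves.
\begin{enumerate}[(1)]
\item The actual role of the $t$-SAB property is \cite[Corollary~6]{Zafrullah-UniversalRestriction}. In a $t$-SAB domain, failure of finite $t$-character yields a proper $t$-ideal of finite type $I=(a_1,\dots,a_n)_t$ and $m\ge 1$ such that $I^m$ lies in infinitely many mutually $t$-comaximal proper principal ideals $\gamma_\lambda$.
\item The APVMD property gives $r\ge 1$ with $C:=(a_1^{mr},\dots,a_n^{mr})_t$ $t$-invertible. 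Then $C\subseteq\gamma_\lambda^r$ for every $\lambda$, and the $\gamma_\lambda^r$ are still principal and mutually $t$-comaximal.
\end{enumerate}
Only then does \cite[Proposition~4]{Zafrullah-BazzoniLike} give a nonzero $t$-locally principal ideal that is not $t$-invertible, contradicting the hypothesis. One can take $\sum_\lambda C\gamma_\lambda^{-r}$. At a maximal $t$-ideal $M$ it localizes to $C\gamma_\mu^{-r}D_M$ if $M\supseteq\gamma_\mu$, and to $CD_M$ if $M$ contains no $\gamma_\lambda$. Finitely many summands cannot generate it up to $t$-closure.

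Your direct construction would not work as sketched. The ideals $xD_P\cap D$ built from a single element $x$ need not be $t$-invertible, and they are not controlled at other maximal $t$-ideals $Q$. So an infinite sum of them need not become principal in $D_Q$. For $t$-local principality you need that, at each maximal $t$-ideal, all but at most one summand localize to the same principal ideal. That is exactly what mutual $t$-comaximality of $t$-invertible pieces containing a common $t$-invertible ideal provides. Distinct maximal $t$-ideals are trivially $t$-comaximal, but that does not help without $t$-invertibility, and nothing in your sketch produces $t$-invertible pieces.

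``Splitting the maximal $t$-ideals containing $x$ into $t$-comaximal pieces'' is precisely what Corollary~6 supplies. The witness it gives is a power of a finite-type $t$-ideal rather than $x$ itself, which is why the APVMD property is needed to pass to a $t$-invertible $C$. Your guessed description of $t$-SAB as a local comparability condition, and the claim that each $D_P$ is an almost valuation domain, are neither justified nor used anywhere in the argument.
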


\begin{proof}
The argument combines several results of Zafrullah. We use the
following inputs.

\begin{enumerate}[(i)]
\item Every AGCD domain is an almost Pr\"ufer $v$-multiplication domain
      (APVMD): for every finite set $x_1,\dots,x_n\in D\setminus\{0\}$
      there exists $m\ge 1$ such that $(x_1^m,\dots,x_n^m)_t$ is
      $t$-invertible \cite[Lemma 4.3(6)]{Zafrullah-UniversalRestriction}.
\item Every AGCD domain is a $t$-SAB domain
      \cite[Lemma 4.3(13)]{Zafrullah-UniversalRestriction}.
\item For a $t$-SAB domain $D$ with $\Gamma$ the set of proper nonzero
      principal ideals, $D$ has finite $t$-character if and only if every
      power of every proper $t$-ideal of finite type is contained in at
      most finitely many mutually $t$-comaximal members of $\Gamma$
      \cite[Corollary 6]{Zafrullah-UniversalRestriction}.
\item If $D$ is a domain and there exists an integral $t$-invertible
      $t$-ideal contained in infinitely many mutually $t$-comaximal
      $t$-invertible $t$-ideals of $D$, then $D$ has a nonzero
      $t$-locally principal ideal that is not $t$-invertible
      \cite[Proposition 4]{Zafrullah-BazzoniLike}.
\end{enumerate}

Assume, for contradiction, that $D$ is not of finite $t$-character. By
(ii), $D$ is a $t$-SAB domain, so by (iii) there exist a proper $t$-ideal
$I$ of finite type and a positive integer $m$ such that $I^m$ is
contained in infinitely many mutually $t$-comaximal proper nonzero
principal ideals $\gamma_\lambda$, $\lambda\in\Lambda$ (with $\Lambda$
infinite). Write $I=J_t$ for a finitely generated ideal
$J=(a_1,\dots,a_n)$. Then $a_i^m\in I^m\subseteq\gamma_\lambda$ for every
$i$ and every $\lambda$.

By (i) there exists $r\ge 1$ such that
\[
   C:=(a_1^{mr},\dots,a_n^{mr})_t
\]
is a $t$-invertible $t$-ideal. For each $\lambda\in\Lambda$ the principal
ideal $\gamma_\lambda^r$ is again a $t$-ideal and contains
$a_1^{mr},\dots,a_n^{mr}$, so $C\subseteq\gamma_\lambda^r$. The
$\gamma_\lambda^r$ remain mutually $t$-comaximal: a maximal $t$-ideal
containing both $\gamma_\lambda^r$ and $\gamma_\mu^r$ would contain both
$\gamma_\lambda$ and $\gamma_\mu$, contradicting their mutual
$t$-comaximality.

Thus $C$ is an integral $t$-invertible $t$-ideal contained in infinitely
many mutually $t$-comaximal $t$-invertible $t$-ideals $\gamma_\lambda^r$
of $D$. By (iv), $D$ has a nonzero $t$-locally principal ideal that is
not $t$-invertible, contradicting the hypothesis on $D$.

Hence $D$ is of finite $t$-character.
\end{proof}

\subsection{Solution to Problem~\ref{prob:37b}}\label{subsec:proof-37b}

A finite-dimensional integral domain $D$ is a \emph{Jaffard domain} if
$\dim D[X_1,\dots,X_n]=n+\dim D$ for every $n\ge 1$; equivalently, if
$\dim D=\dim_v D$, where the \emph{valuative dimension} $\dim_v D$ is
the supremum of $\dim V$ over valuation overrings $V$ of $D$ in
$\operatorname{Frac}(D)$. $D$ is \emph{locally Jaffard} if $D_P$ is
Jaffard for every prime ideal $P$ of $D$. The domain under
consideration is David's $3$-dimensional non-Noetherian factorial
domain
\[
   J=\bigcup_{n\ge 1} J_n,\qquad J_n=k[X,\zeta_{n-1},\zeta_n],
\]
where $k$ is a characteristic-zero field, $\{s(n)\}_{n\ge 2}$ is a
sequence of positive integers, and the elements $\zeta_n$ satisfy the
recurrence $X\zeta_n=\zeta_{n-1}^{s(n)}+\zeta_{n-2}$ for $n\ge 2$. We
use freely the following properties of $J$ established in
\cite{David1973}: $J$ is a UFD of Krull dimension $3$; $XJ$ is prime;
$J_{(X)}$ is a rank-one discrete valuation ring (DVR); and the
``origin'' ideal $(X,\zeta_0,\zeta_1,\dots)J$ is a maximal ideal of
height three.

\begin{theorem}\label{thm:37b}
Let $J=\bigcup_{n\ge 1}J_n$ with $J_n=k[X,\zeta_{n-1},\zeta_n]$, over a
characteristic-zero field $k$, with the recurrence
$X\zeta_n=\zeta_{n-1}^{s(n)}+\zeta_{n-2}$ for $n\ge 2$, be David's
$3$-dimensional non-Noetherian factorial domain \cite{David1973}. Then
$J$ is locally Jaffard.
\end{theorem}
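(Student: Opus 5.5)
We must show that $J_P$ is Jaffard for every prime $P$ of $J$, that is, $\dim J_P=\dim_v J_P$. The inequality $\dim J_P\le\dim_v J_P$ always holds, so the work is the reverse inequality. The basic tool is the localization $J[X^{-1}]$. Since $J_n\subseteq J\subseteq J_n[X^{-1}]$, we get $J[X^{-1}]=J_n[X^{-1}]$. By the recurrence this ring is $k[X,X^{-1},\zeta_0,\zeta_1]$, which is Noetherian and a localization of a polynomial ring in three variables. Hence $J[X^{-1}]$ is locally Jaffard: affine domains are Jaffard, and so are their localizations at primes. I split into two cases according to whether $X\in P$.

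\emph{Case $X\notin P$.} Here $J_P$ is a localization of $J[X^{-1}]=k[X^{\pm1},\zeta_0,\zeta_1]$ at a prime. It is therefore a Noetherian local ring essentially of finite type over $k$, and so Jaffard; explicitly, $\dim_v J_P=\operatorname{ht}P$ by the dimension formula.

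\emph{Case $X\in P$.} I would use the valuative-dimension bound for the localization at a principal element. For a valuation overring $V$ of $J_P$, with $\operatorname{Frac}(J)$ of transcendence degree $3$ over $k$, we have $\dim V\le 3$. So if $\operatorname{ht}P=3$ there is nothing to prove.

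Now suppose $\operatorname{ht}P\le 2$. I want to show $\dim_v J_P\le\operatorname{ht}P$. Take a chain of primes $Q_0\subsetneq\cdots\subsetneq Q_r$ in the center of a valuation overring $V$ of $J_P$; equivalently, look at the chain of centers on $J_P$ of the primes of $V$. Let $\mathfrak p$ be the largest prime of $V$ not containing $X$. Then $V_{\mathfrak p}$ is a valuation overring of $J[X^{-1}]$ centered on a prime $P'\subseteq P$ with $X\notin P'$, so $\dim V_{\mathfrak p}\le\operatorname{ht}P'$ by the Noetherian case. The primes of $V$ containing $X$ form a chain above $\mathfrak p$, and $V/\mathfrak p$ is a valuation ring whose fraction field has transcendence degree over that of $J/P'$ controlled by Abhyankar's inequality. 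The key point I would establish is that the primes of $V$ containing $X$ contribute at most as much as the step from $P'$ to $P$.

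To make this concrete I would exploit the structure near $X$: $XJ$ is prime and $J_{(X)}$ is a DVR. If $P=XJ$ then $J_P$ is a DVR, which is Jaffard. Otherwise $\operatorname{ht}P=2$ with $XJ\subsetneq P$. I would then show that $P$ contains some $\zeta_n-c$, or a polynomial in $\zeta_{n-1},\zeta_n$, allowing reduction to the Noetherian ring $J_n$ localized suitably. The recurrence gives $\zeta_{n-2}\equiv-\zeta_{n-1}^{s(n)}\pmod{XJ}$, and hence $J/XJ=\varinjlim k[\zeta_{n-1},\zeta_n]$. In this limit each $\zeta_{n-1}$ is a polynomial in $\zeta_n$ modulo $X$, so $J/XJ$ is a union of polynomial rings in one variable. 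This yields $J/XJ$ of dimension $2$ locally behaving like $k[\zeta_n]$-algebras with $\zeta_{n-1}$ free, since $J/XJ\cong k[\zeta_n\colon n]$ with relations $\zeta_{n-2}=-\zeta_{n-1}^{s(n)}$, which is a union of polynomial rings $k[\zeta_n]$ in two variables counting $X$-information. Using this, I would show $P$ is the extension of a height-two prime $P_n$ of $J_n$ for $n\gg0$, with $J_P$ dominated by $(J_n)_{P_n}$ in a way that gives $\dim_v J_P\le\dim_v (J_n)_{P_n}=2$. The justification is that $J_P$ and $(J_n)_{P_n}$ share the fraction field and $J_P\supseteq(J_n)_{P_n}$, so every valuation overring of $J_P$ is one of $(J_n)_{P_n}$.

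The main obstacle is this last step: identifying the height-two primes containing $X$ with primes of some $J_n$ so that $J_P$ contains $(J_n)_{P\cap J_n}$ with $\operatorname{ht}(P\cap J_n)=\operatorname{ht}P$. The containment itself is automatic. What is delicate is the height equality, because $J$ is not Noetherian and heights might drop in the union. I would first try to prove $\operatorname{ht}(P\cap J_n)\le 2$ for large $n$ by showing $P\cap J_n\not\supseteq(X,\zeta_{n-1},\zeta_n)$. If it did, the recurrence would force all $\zeta_m\in P$, making $P$ the height-three origin ideal.
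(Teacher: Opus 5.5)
Your cases $X\notin P$, $P=XJ$ and $\operatorname{ht}P=3$ are fine; the transcendence-degree bound $\dim V\le 3$ is a legitimate substitute for the paper's use of $\dim_v J=3$. The gap is in the one substantive case, $\operatorname{ht}P=2$ with $XJ\subsetneq P$.

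Your plan there is to find $n$ with $\operatorname{ht}(P\cap J_n)=2$ and then use $\dim_v J_P\le\dim_v (J_n)_{P\cap J_n}$. This cannot work, because $P\cap J_n$ is a maximal ideal of height $3$ in $J_n$ for \emph{every} $n$. To see this, note that $A=J/XJ=\bigcup_n k[b_n]$ with $b_{n-1}=-b_n^{s(n+1)}$ is integral over $k[b_1]$. So $A$ is one-dimensional, not two-dimensional as you write. Each $b_n$ is transcendental over $k$, since otherwise $A$ would be a union of fields, forcing $XJ$ to be maximal, which is absurd since $XJ\subsetneq(X,\zeta_0,\zeta_1,\dots)J$. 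Hence $XJ\cap J_n=(X,\zeta_{n-1}+\zeta_n^{s(n+1)})J_n$ already has height $2$ in $J_n$. By incomparability, the nonzero prime $P/XJ$ contracts to a nonzero, hence maximal, ideal of every $k[b_n]$, so $J_n/(P\cap J_n)$ is a field. For instance, over an algebraically closed $k$ one gets $P\cap J_n=(X,\zeta_{n-1}-c_{n-1},\zeta_n-c_n)$ with $c_n\ne 0$.

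Your criterion $P\cap J_n\not\supseteq(X,\zeta_{n-1},\zeta_n)$ rules out only one of the many maximal ideals of $J_n$ containing $X$, so it does not bound the height. Heights really do drop in the union, at every stage. The comparison with $(J_n)_{P\cap J_n}$ therefore yields only $\dim_v J_P\le 3$. The Abhyankar-style sketch before it also leaves its key claim unproved, namely that the primes of $V$ containing $X$ contribute no more than the step from $P'$ to $P$.

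What is missing is an argument about $J_P$ itself. The paper proves that $J_P$ is Noetherian. With $\mathfrak q=P/XJ$, it shows that $A_{\mathfrak q}=\bigcup_n k[b_n]_{\mathfrak q_n}$ is a DVR, because each transition $b_n=-b_{n+1}^{s(n+2)}$ is unramified at $\mathfrak q_{n+1}$. This is exactly where characteristic zero and $b_{n+1}\notin\mathfrak q$ are used; note that your proposal never uses the characteristic hypothesis. It follows that $PJ_P=(X,t)J_P$ for a lift $t$ of a uniformizer. The other nonzero primes of $J_P$ come from height-one primes of the UFD $J$, so they are principal. Cohen's theorem then makes $J_P$ a two-dimensional Noetherian domain, hence Jaffard.
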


\begin{proof}
We show that $J_P$ is Jaffard for every $P\in\operatorname{Spec}(J)$.
By \cite[Theorem~2.3]{DobbsFontanaKabbaj1990} applied to the directed
union $J_n\subseteq J\subseteq J_n[X^{-1}]$ of affine Jaffard domains,
$J$ itself is Jaffard, hence
\[
   \dim_v(J)=\dim(J)=3.
\]

\smallskip
\noindent\emph{Case 1: $X\notin P$.} Then $J_P=(J[X^{-1}])_{PJ[X^{-1}]}
=(J_n[X^{-1}])_{PJ_n[X^{-1}]}$ for any $n$, which is a localization of a
three-variable polynomial ring over $k$, hence Noetherian and thus
Jaffard.

\smallskip
\noindent\emph{Case 2: $X\in P$, $\operatorname{ht}(P)=1$.} Since $XJ$ is
prime by \cite{David1973}, this forces $P=XJ$. By \cite[Lemma~2.3]{David1973}, $J_P=J_{(X)}$ is a rank-one discrete valuation ring,
in particular Jaffard.

\smallskip
\noindent\emph{Case 3: $X\in P$, $\operatorname{ht}(P)=3$.} Then
$\dim(J_P)=3$ and, since localization does not increase valuative
dimension,
\[
   \dim_v(J_P)\le\dim_v(J)=3.
\]
The reverse inequality $\dim_v(J_P)\ge\dim(J_P)=3$ always holds, so
$J_P$ is Jaffard.

\smallskip
\noindent\emph{Case 4: $X\in P$, $\operatorname{ht}(P)=2$.} This is the
substantive case. Let $A:=J/XJ$ and write $b_n$ for the image of
$\zeta_n$ in $A$. Modulo $X$, the recurrence becomes
$b_{n-2}=-b_{n-1}^{s(n)}$, so the image of $J_n=k[X,\zeta_{n-1},\zeta_n]$
in $A$ is $k[b_n]$, the transition $k[b_n]\hookrightarrow k[b_{n+1}]$
sends $b_n$ to $-b_{n+1}^{s(n+2)}$, and $A=\bigcup_n k[b_n]$.

Set $\mathfrak q:=P/XJ\subset A$. If $\mathfrak q=0$ then
$P=XJ$, contradicting $\operatorname{ht}(P)=2$. If $\mathfrak q$
contained some $b_n$, then the recurrence would force it to contain
every $b_m$, so $\mathfrak q=(b_0,b_1,\dots)$ and its inverse image in
$J$ would be David's origin maximal ideal
$(X,\zeta_0,\zeta_1,\dots)J$, which has height three by
\cite{David1973}, contradicting $\operatorname{ht}(P)=2$. Hence
$\mathfrak q$ is nonzero and contains no $b_n$.

For each $n$ the contraction $\mathfrak q_n:=\mathfrak q\cap k[b_n]$ is
a nonzero prime of $k[b_n]$: integrality of all transition maps makes
later contractions nonzero once one is, and earlier contractions cannot
be zero because otherwise the field $k[b_m]/\mathfrak q_m$ would be
integral over the non-field $k[b_n]$. Therefore $\mathfrak q_n$ is
maximal in $k[b_n]$; write $\mathfrak q_n=(f_n(b_n))$ with
$f_n(0)\ne 0$. Since $A/\mathfrak q=\bigcup_n k[b_n]/\mathfrak q_n$ is a
directed union of fields along field embeddings, $A/\mathfrak q$ is a
field, so $\mathfrak q$ is maximal. Setting $R_n:=k[b_n]_{\mathfrak q_n}$,
the local map $R_n\to R_{n+1}$ is finite and unramified at
$\mathfrak q_{n+1}$ because the defining equation is
$b_n+b_{n+1}^{s(n+2)}=0$ and its derivative
$s(n+2)\,b_{n+1}^{s(n+2)-1}$ is a unit (since the characteristic is
zero and $b_{n+1}\notin\mathfrak q_{n+1}$). Therefore a uniformizer of
some $R_n$ remains a uniformizer in every later $R_m$, and
$A_{\mathfrak q}=\bigcup_n R_n$ is a DVR.

Now set $S:=J_P$ and $\mathfrak m:=PS$. The contraction of $P$ to $J_n$
is $P_n=(X,\zeta_{n-1}+\zeta_n^{s(n+1)},f_n(\zeta_n))J_n$, the inverse
image of $\mathfrak q_n$ under $J_n\to A$. Choose an index $N$ and put
$t:=f_N(\zeta_N)\in S$. Every element of $P$ lies in some $P_n$. The
generator $\zeta_{n-1}+\zeta_n^{s(n+1)}=X\zeta_{n+1}\in XS$. The image
of $f_n(\zeta_n)$ in $S/XS\cong A_{\mathfrak q}$ is a uniformizer of the
DVR $A_{\mathfrak q}$, as is the image of $t$, and they therefore differ
by a unit of $A_{\mathfrak q}$; lifting that unit to a unit of $S$ gives
$f_n(\zeta_n)\in(X,t)S$. Therefore $\mathfrak m=(X,t)S$.

We claim that every prime of $S$ is finitely generated, which by
Cohen's theorem implies $S$ is Noetherian. The zero ideal and the
maximal ideal $\mathfrak m=(X,t)S$ are visibly finitely generated. If
$H$ is a nonzero non-maximal prime of $S$, set $H_0:=H\cap J$. Then
$H=H_0 S$ and $H_0\subsetneq P$. Since $\operatorname{ht}_J(P)=2$, the
strict inclusion $H_0\subsetneq P$ forces $\operatorname{ht}_J(H_0)=1$
(else a length-two chain below $H_0$ followed by $H_0\subsetneq P$ would
push $\operatorname{ht}_J(P)\ge 3$). Hence $H_0$ is a height-one prime of
the factorial domain $J$ \cite{David1973}, so generated by a prime
element; localizing at $P$ shows $H$ is principal in $S$. Thus all
primes of $S$ are finitely generated and $S$ is Noetherian. A
finite-dimensional Noetherian domain is Jaffard, so $J_P$ is Jaffard.

This completes Case~4 and the proof.
\end{proof}

\subsection{Solution to Question~\ref{q:bs61}}\label{subsec:proof-bs61}

Let $k$ be a field. For a finitely generated graded module $M$ over a
graded Noetherian $k$-algebra $R$ with $R_0=k$, the \emph{Betti table}
of $M$ has entries
$\beta_{i,j}(M):=\dim_k\operatorname{Tor}_i^R(M,k)_j$. A strictly
increasing tuple $(d_0,d_1,\dots,d_n)$ of integers is a \emph{degree
sequence}; the corresponding \emph{pure ray} in the Boij--S\"oderberg
cone is the half-line of Betti tables supported in degrees $(i,d_i)$
--- i.e.\ with $\beta_{i,j}=0$ for $j\ne d_i$ --- whose nonzero entries
$\beta_i:=\beta_{i,d_i}$ satisfy the \emph{Herzog--K\"uhl equations}
\[
   \sum_{i=0}^n (-1)^i \beta_i d_i^m=0,\qquad m=0,1,\dots,n-1.
\]
An \emph{integral point} on the ray is a tuple
$(\beta_0,\dots,\beta_n)$ of nonnegative integers satisfying these
equations. The equations have a one-dimensional solution space, with
primitive integral point proportional to
$\bigl(\prod_{j\ne i}|d_i-d_j|^{-1}\bigr)_{i=0,\dots,n}$.

Throughout this subsection, $S=k[x,y,z]$ carries the standard grading,
and $\mathfrak H$ is the Heisenberg Lie algebra on basis $\{x,y,z\}$
with $[x,y]=z$, $[x,z]=[y,z]=0$, graded by $\deg(x)=\deg(y)=1$,
$\deg(z)=2$; its universal enveloping algebra $U(\mathfrak H)$
inherits this $\mathbf Z_{\ge 0}$-grading.

\begin{theorem}\label{thm:bs61}
Let $k$ be a field, let $S=k[x,y,z]$ with $\deg(x)=\deg(y)=\deg(z)=1$,
and let $\mathfrak H$ be the Heisenberg Lie algebra on basis $\{x,y,z\}$
with $[x,y]=z$, $[x,z]=[y,z]=0$, graded by $\deg(x)=\deg(y)=1$,
$\deg(z)=2$. There exist a degree sequence $(d_0,d_1,d_2,d_3)$ and an
integral point on the corresponding pure ray in the Boij--S\"oderberg
cone that arises as the Betti table of no finite length graded module
over $S$ and of no finite length graded module over $U(\mathfrak H)$.
\end{theorem}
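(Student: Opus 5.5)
The plan is to use the simplest possible obstruction: an integral point with $\beta_0=1$ forces the module to be cyclic, and cyclic finite length modules are very rigid over both rings. First I will normalise $d_0=0$ and write the primitive point as proportional to $\bigl(\prod_{j\ne i}|d_i-d_j|^{-1}\bigr)_i$. The goal is a degree sequence $(0,d_1,d_2,d_3)$ whose \emph{primitive} integral point is $(1,2,\beta_2,\beta_3)$. The Herzog--K\"uhl equation for $m=0$ then reads $\beta_2=\beta_3+1$.

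Over $S$ such a point is never realised. A module $M$ with $\beta_0=1$ generated in degree $0$ is $S/I$. Here $I$ is minimally generated by $\beta_1=2$ forms, since the generators of $I$ are counted by $\beta_1$. But an ideal with two generators has height at most $2$ by Krull's principal ideal theorem, so $S/I$ cannot have finite length in $k[x,y,z]$. This half of the argument needs no computation.

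Over $U(\mathfrak H)$ two generators can suffice, as $U(\mathfrak H)/(x,y)=k$ shows, so a finer argument is needed there. First I will rule out $d_1=1$. In that case the two generators are independent linear forms in $x,y$, so $L=U(\mathfrak H)x+U(\mathfrak H)y$ and $M=k$. The resolution of $k$ is the Chevalley--Eilenberg complex, which has degrees $(0,1,3,4)$. A direct computation shows that $(0,1,d_2,d_3)$ has primitive point with $\beta_1/\beta_0=2$ iff $(d_2-2)(d_3-2)=2$, i.e.\ only for $(0,1,3,4)$. So I will take $a:=d_1\ge 2$, which gives $M=U(\mathfrak H)/(U(\mathfrak H)f+U(\mathfrak H)g)$ with $\deg f=\deg g=a$.

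To get a contradiction I will pass to $M/zM$, which is a quotient of $M$ and hence also of finite length. Since $z$ is central of degree $2$, we have $U(\mathfrak H)/(z)\cong k[x,y]$, so $M/zM\cong k[x,y]/(\bar f,\bar g)$. For this to have finite length, $\bar f,\bar g$ must be a regular sequence of degree-$a$ forms in $k[x,y]$. Its Hilbert series is then $(1-t^a)^2/(1-t)^2$. Independently, the Hilbert series of $M$ is forced by the Betti table to be
\[
H_M(t)=\frac{1-2t^{a}+\beta_2t^{d_2}-\beta_3t^{d_3}}{(1-t)^2(1-t^2)}.
\]
I will compare these using $H_{M/zM}(t)\ge (1-t^2)H_M(t)$ coefficientwise, with equality iff $z$ is a nonzerodivisor on $M$. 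Equality is impossible, since $z$ acts nilpotently on a finite length $M$. The concrete step is then to choose $(d_2,d_3)$ so that the resulting inequality
\[
(1-t^a)^2(1+t)\;\ge\;\frac{1-2t^{a}+\beta_2t^{d_2}-\beta_3t^{d_3}}{1-t},
\]
read coefficientwise, fails; for instance the two sides disagree in degree $d_2$ or in the top socle degree. Alternatively, I could choose them so that $H_M$ has a negative coefficient or a total length inconsistent with the $zM$-filtration.

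The main obstacle is this arithmetic search. We need an explicit $(d_1,d_2,d_3)$ with $d_1\ge2$ for which the primitive point has exactly $\beta_0=1$ and $\beta_1=2$, i.e.\ $d_2d_3(d_3-d_1)(d_2-d_1)\cdot$(normalisation) works out, and for which one of the Hilbert series contradictions above actually bites. I would first scan small tuples such as $(0,2,d_2,d_3)$ by solving $\beta_1/\beta_0=d_2d_3/\bigl((d_2-d_1)(d_3-d_1)\bigr)\cdot d_1^{-1}\cdot\ldots=2$ and test the Hilbert series condition. If no such tuple exists, I would fall back on the dual obstruction: $\beta_3=1$ together with duality via $\operatorname{Ext}^3(-,U(\mathfrak H))$, which is available since $U(\mathfrak H)$ is Artin--Schelter regular.
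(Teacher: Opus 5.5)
There is a genuine gap, and it is on the $U(\mathfrak H)$ side. Your argument over $S$ (cyclic module, two relations, Krull's height theorem) and your aim of a primitive point $(1,2,\beta_2,\beta_3)$ agree with the paper.

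\textbf{No witness, and the proposed obstruction cannot work.} The statement is an existence claim, so exhibiting a degree sequence with a verified obstruction \emph{is} the proof, and you leave exactly this as an open ``arithmetic search''. The obstruction you mainly propose also fails for every admissible sequence. With $d_0=0$ one has $\beta_1/\beta_0=\frac{d_2}{d_2-d_1}\cdot\frac{d_3}{d_3-d_1}$. Since the second factor exceeds $1$, the condition $\beta_1/\beta_0=2$ forces $d_2>2d_1$. The exact sequence $0\to(0:_Mz)(-2)\to M(-2)\xrightarrow{z}M\to M/zM\to0$ gives
\[
t^2H_{(0:_Mz)}(t)=H_{M/zM}(t)-(1-t^2)H_M(t)=\frac{t^{2d_1}-\beta_2t^{d_2}+\beta_3t^{d_3}}{(1-t)^2}.
\]
Use $\beta_2=\beta_3+1$ and the $m=1$ Herzog--K\"uhl equation. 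Then the coefficient of $t^N$ on the right is $N-2d_1+1>0$ for $2d_1\le N<d_2$. It then decreases linearly with slope $-\beta_3$ to $0$ at $N=d_3-1$, and is $0$ afterwards. So the comparison with $M/zM$ is consistent for every candidate; it fails neither in degree $d_2$ nor in the socle degree.

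Your displayed inequality does not follow from the exact sequence. Relative to the correct inequality $\frac{(1-t^{d_1})^2}{(1-t)^2}\ge\frac{p(t)}{(1-t)^2}$, its two sides are not even multiplied by the same factor. It already fails in degree $2$ for every $d_1\ge2$, independently of $d_2,d_3$, so using it would produce a spurious contradiction.

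\textbf{The missing idea: a parity check.} Since $H_{U(\mathfrak H)}(t)=1/\bigl((1-t)^2(1-t^2)\bigr)$ and $H_M(t)$ must be a polynomial, $1+t$ must divide $p(t)=\sum_i(-1)^i\beta_it^{d_i}$, i.e.\ $p(-1)=0$. The paper takes $(d_0,d_1,d_2,d_3)=(0,6,20,21)$. Its primitive point is $(1,2,9,8)$, so your $S$-argument applies unchanged. For this sequence $p(t)=1-2t^6+9t^{20}-8t^{21}$ has $p(-1)=16\ne0$.

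Your fallback ``$H_M$ has a negative coefficient'' is this same obstruction in disguise. The $m=0,1,2$ equations give $p=(1-t)^3r(t)$, so $H_M=r(t)/(1+t)$, whose coefficients eventually alternate in sign when $r(-1)\ne0$. But you neither isolated the condition $p(-1)\ne0$ nor produced a sequence satisfying it. The passage to $M/zM$ cannot detect it, because $(1-t^2)H_M=p(t)/(1-t)^2$ is a polynomial in any case.
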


\begin{proof}
Take the degree sequence
\[
   (d_0,d_1,d_2,d_3)=(0,6,20,21).
\]
The codimension-$3$ Herzog--K\"uhl ratios for this pure ray are
\[
   \frac{1}{6\cdot 20\cdot 21}:\frac{1}{6\cdot 14\cdot 15}
   :\frac{1}{20\cdot 14\cdot 1}:\frac{1}{21\cdot 15\cdot 1}
   =1:2:9:8,
\]
so $\beta=(1,2,9,8)$ is an integral point on that ray. We show $\beta$ is
realizable over neither $S$ nor $U(\mathfrak H)$.

\smallskip
\noindent\emph{Not realizable over $S$.} If a finite length graded
$S$-module $M$ had Betti table $\beta$, then $\beta_0(M)=1$, so
$M\cong S/I$ for some homogeneous ideal $I$, and $\beta_1(M)=2$ means
$I$ is generated by two elements. By Krull's height theorem
$\operatorname{ht}(I)\le 2$, so $\dim(S/I)\ge 1$, contradicting finite
length.

\smallskip
\noindent\emph{Not realizable over $A:=U(\mathfrak H)$.} By the
Poincar\'e--Birkhoff--Witt theorem, the monomials $x^a y^b z^c$ form a
graded $k$-basis of $A$ with $\deg(x)=\deg(y)=1$, $\deg(z)=2$, so
\[
   H_A(t)=\sum_{a,b,c\ge 0}t^{a+b+2c}=\frac{1}{(1-t)^2(1-t^2)}.
\]
If a finite length graded $A$-module $N$ had Betti table $\beta$, then
\[
   H_N(t)=\frac{1-2t^6+9t^{20}-8t^{21}}{(1-t)^2(1-t^2)},
\]
and $H_N(t)$ must be a polynomial. Hence
$P(t):=1-2t^6+9t^{20}-8t^{21}$ must be divisible by $(1-t)^2(1-t^2)$,
and in particular by $1+t$. However,
\[
   P(-1)=1-2+9+8=16\ne 0,
\]
so $1+t$ does not divide $P(t)$, contradiction.

Therefore the integral point $(1,2,9,8)$ on the pure ray for
$(0,6,20,21)$ is realizable over neither $S$ nor $U(\mathfrak H)$.
\end{proof}

\subsection{Solution to Question~\ref{q:bs62}}\label{subsec:proof-bs62}

Let $k$ be a field. A \emph{positively graded Lie algebra} over $k$ is
$\mathfrak g=\bigoplus_{i\ge 1}\mathfrak g_i$ with bracket of degree
zero; it is \emph{generated in degree~$1$} if it is generated as a Lie
algebra by $\mathfrak g_1$. The universal enveloping algebra
$U(\mathfrak g)$ inherits a $\mathbf Z_{\ge 0}$-grading, and by the
Poincar\'e--Birkhoff--Witt theorem the ordered monomials in a
homogeneous basis of $\mathfrak g$ form a graded $k$-basis of
$U(\mathfrak g)$. For a finitely generated graded $U(\mathfrak g)$-module
$M$, the \emph{Betti table} of $M$ has entries
$\beta_{i,j}(M):=\dim_k\operatorname{Tor}_i^{U(\mathfrak g)}(M,k)_j$.
A \emph{degree sequence} is a strictly increasing tuple
$(d_0,\dots,d_n)$ of integers; the corresponding \emph{pure ray} in
the Boij--S\"oderberg cone is the half-line of Betti tables supported
in degrees $(i,d_i)$, with nonzero entries
$\beta_i:=\beta_{i,d_i}$ satisfying the \emph{Herzog--K\"uhl equations}
$\sum_{i=0}^n(-1)^i\beta_i d_i^m=0$ for $m=0,1,\dots,n-1$. An
\emph{integral point} on the ray is a tuple of nonnegative integers
satisfying these equations.

\begin{theorem}\label{thm:bs62}
Let $k$ be a field. There exist an integer $n\ge 1$, a degree sequence
$(d_0,\dots,d_n)$, and an integral point on the corresponding pure ray
in the Boij--S\"oderberg cone for which no $n$-dimensional
$\mathbb Z_{>0}$-graded Lie algebra $\mathfrak g$ generated in degree~$1$
over $k$ admits a finite length graded module $M$ over $U(\mathfrak g)$
whose Betti table is that integral point.
\end{theorem}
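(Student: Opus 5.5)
The plan is to take $n=3$ and reuse the integral point $(1,2,9,8)$ on the pure ray for the degree sequence $(0,6,20,21)$ from \Cref{thm:bs61}. Write $\mathfrak g$ for a $3$-dimensional positively graded Lie algebra generated in degree~$1$. The first step is to classify the possible grading types. If $\dim\mathfrak g_1=3$, then $\mathfrak g=\mathfrak g_1$, since $[\mathfrak g_1,\mathfrak g_1]$ would lie in $\mathfrak g_2=0$. So $\mathfrak g$ is abelian and $U(\mathfrak g)=k[x,y,z]$ with the standard grading. If $\dim\mathfrak g_1=2$, then $\mathfrak g_2$ is spanned by the single bracket $[x,y]$, and the Jacobi identity together with degree considerations forces $\mathfrak g_3=[\mathfrak g_1,\mathfrak g_2]$ to vanish, because it would have to be a fourth basis element. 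Hence $\mathfrak g\cong\mathfrak H$ with the grading of \Cref{thm:bs61}. If $\dim\mathfrak g_1=1$, the Lie algebra generated is one-dimensional, which is impossible. Thus only $S$ and $U(\mathfrak H)$ occur.

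The second step is to invoke \Cref{thm:bs61} directly: it already shows that $(1,2,9,8)$ is the Betti table of no finite length graded module over either algebra. This argument is valid over an arbitrary field $k$. The $S$-case uses Krull's height theorem. The $U(\mathfrak H)$-case uses the Hilbert series divisibility by $1+t$, which only requires the PBW basis and holds in every characteristic.

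The main point to check carefully is the classification. In particular, in the case $\dim\mathfrak g_1=2$ one needs $[x,y]\neq0$; otherwise $\mathfrak g$ would not be generated in degree $1$, since it would then be $2$-dimensional. One also needs to confirm that the resulting grading on $U(\mathfrak g)$ is exactly $\deg x=\deg y=1$, $\deg z=2$, so that the Hilbert series $1/((1-t)^2(1-t^2))$ applies. Once that is done, the theorem follows from \Cref{thm:bs61}.
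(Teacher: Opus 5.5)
Your proof is correct, and it takes a different route from the paper's. The paper works with $n=4$, the degree sequence $(0,1,4,5,6)$ and the primitive point $(1,2,5,6,2)$, and never classifies Lie algebras. The PBW Hilbert series forces $\prod_{i\ge1}(1-t^i)^{h_i}$ to divide $p(t)=(1-t)^4(2t^2+2t+1)$. The cofactor $2t^2+2t+1$ is divisible by no $1+t+\cdots+t^{i-1}$ with $i\ge2$, so $\mathfrak g=\mathfrak g_1$ is abelian. Krull's height theorem then finishes, using $\beta_0=1$ and $\beta_1=2<4$.

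You instead stay at $n=3$ and use the elementary classification of $3$-dimensional positively graded Lie algebras generated in degree~$1$. There $\dim\mathfrak g_1\in\{2,3\}$. If it is $2$, then $\mathfrak g_2$ is spanned by $[x,y]\neq0$, and $\mathfrak g_3=0$ by dimension count alone; the Jacobi identity is not needed. This gives exactly $\mathfrak H$ with grading $(1,1,2)$, and the statement reduces verbatim to \Cref{thm:bs61}, whose proof is indeed characteristic-free.

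Your route yields a smaller counterexample. It also makes explicit that Question~\ref{q:bs61} is precisely the $n=3$ instance of Question~\ref{q:bs62}, so the negative answer to the second follows from the first. The paper's route is self-contained and classification-free. Its example is chosen so that the cofactor $p(t)/(1-t)^n$ has no factor of the form $1+t+\cdots+t^{i-1}$, which forces abelianness without knowing anything about graded Lie algebras of dimension $n$. That mechanism scales to higher dimensions, where a case-by-case classification would be impractical.
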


\begin{proof}
We give a counterexample with $n=4$ and
\[
   d=(0,1,4,5,6).
\]

\smallskip
\noindent\emph{Step 1: the primitive integral point.} Let
$\beta=(\beta_0,\beta_1,\beta_2,\beta_3,\beta_4)$ be the coefficients of
a pure table on this ray, so the only nonzero Betti entries are
$\beta_{i,d_i}=\beta_i$. The Herzog--K\"uhl equations for a codimension-$4$
pure resolution read
\[
   \sum_{i=0}^4 (-1)^i \beta_i d_i^m=0,\qquad m=0,1,2,3.
\]
Normalizing $\beta_0=1$, the four equations become
\[
   1-\beta_1+\beta_2-\beta_3+\beta_4=0,\qquad
   -\beta_1+4\beta_2-5\beta_3+6\beta_4=0,
\]
\[
   -\beta_1+16\beta_2-25\beta_3+36\beta_4=0,\qquad
   -\beta_1+64\beta_2-125\beta_3+216\beta_4=0.
\]
Subtracting consecutive equations,
\[
   6\beta_2-10\beta_3+15\beta_4=0,\qquad
   12\beta_2-25\beta_3+45\beta_4=0;
\]
subtracting twice the first from the second gives $\beta_3=3\beta_4$, and
back-substitution yields $2\beta_2=5\beta_4$ and $\beta_1=\beta_4$.
Plugging into the first equation gives $\beta_4=2$, so
\[
   (\beta_0,\beta_1,\beta_2,\beta_3,\beta_4)=(1,2,5,6,2),
\]
which is already integral, hence the primitive integral point on the
ray.

\smallskip
\noindent\emph{Step 2: PBW Hilbert series and divisibility.} Let
$\mathfrak g=\bigoplus_{i\ge 1}\mathfrak g_i$ be a finite-dimensional
positively graded Lie algebra over $k$ with $h_i:=\dim_k\mathfrak g_i$.
Choosing a homogeneous basis and applying PBW, the ordered monomials in
basis vectors form a graded $k$-basis of $U(\mathfrak g)$, so
\[
   H_{U(\mathfrak g)}(t)=\prod_{i\ge 1}(1-t^i)^{-h_i}.
\]
If a finite length graded $U(\mathfrak g)$-module $M$ admits a pure
resolution with Betti numbers $\beta_i$ in degrees $d_i$, then taking
alternating sums of Hilbert series in a graded free resolution gives
$H_M(t)=p_M(t)\,H_{U(\mathfrak g)}(t)$ with $p_M(t)=\sum_i(-1)^i\beta_i
t^{d_i}$. Since $H_M(t)$ is a polynomial,
\[
   p_M(t) \text{ is divisible by } \prod_{i\ge 1}(1-t^i)^{h_i}.
\]

\smallskip
\noindent\emph{Step 3: contradiction.} Suppose $\mathfrak g$ is a
$4$-dimensional positively graded Lie algebra over $k$ generated in
degree~$1$ and $M$ is a finite length graded $U(\mathfrak g)$-module with
the Betti table from Step~1. The alternating Betti polynomial is
\[
   p(t)=1-2t+5t^4-6t^5+2t^6=(1-t)^4\bigl(2t^2+2t+1\bigr).
\]
By Step~2 it is divisible by $\prod_{i\ge 1}(1-t^i)^{h_i}$. Since
$\dim\mathfrak g=\sum h_i=4$, dividing by $(1-t)^4$ leaves
\[
   q(t):=2t^2+2t+1
\]
divisible by $\prod_{i\ge 2}(1+t+\cdots+t^{i-1})^{h_i}$. But:
$q(-1)=1\ne 0$, so $1+t$ does not divide $q$; modulo $1+t+t^2$ one has
$t^2+t\equiv -1$, hence $q(t)\equiv -1$, so $1+t+t^2$ does not divide
$q$ over any field; and $\deg q=2$, so no $1+t+\cdots+t^{i-1}$ with
$i\ge 4$ can divide $q$. Therefore $h_i=0$ for all $i\ge 2$, i.e.
$\mathfrak g$ is concentrated in degree~$1$. Then
$[\mathfrak g,\mathfrak g]\subseteq\mathfrak g_2=0$, so $\mathfrak g$ is
abelian and
\[
   U(\mathfrak g)\cong k[x_1,x_2,x_3,x_4]=:R.
\]
So $M$ would be a finite length graded $R$-module with Betti table
$(1,2,5,6,2)$. But $\beta_0=1$ means $M\cong R/I$ for some homogeneous
ideal $I$, and $\beta_1=2$ means $I$ is minimally generated by two
homogeneous elements; by Krull's height theorem $\operatorname{ht}(I)\le 2$,
so $\dim(R/I)\ge 4-2=2$, contradicting finite length.

Thus no such pair $(\mathfrak g,M)$ exists for the degree sequence
$(0,1,4,5,6)$ and the integral point $(1,2,5,6,2)$.
\end{proof}


\end{document}